\newtheorem{thm}{Theorem}[section]
\newtheorem{cor}[thm]{Corollary}
\newtheorem{lem}[thm]{Lemma}
\newtheorem{prop}[thm]{Proposition}
\theoremstyle{definition}
\numberwithin{equation}{section}
\begin{document}

%%%%% To ease editing, for IMPAN journals add:

\baselineskip=20pt

%%%%%%%%%%%%%%%%

\title{The structure of periodic point free distal homeomorphisms on the annulus}

\author[E.~Shi]{Enhui Shi}
\address{School of Mathematics and Sciences, Soochow University, Suzhou, Jiangsu 215006, China}
\email{ehshi@suda.edu.cn}

\author[H.~Xu]{Hui Xu*}
\thanks{*Corresponding author}
\address{ Department of mathematics, Shanghai normal university, Shanghai 200234,
 China}
\email{huixu@shnu.edu.cn}

\author[Z.~Yu]{Ziqi YU}
\address{School of Mathematics and Sciences, Soochow University, Suzhou, Jiangsu 215006, China}
\email{20204207013@stu.suda.edu.cn}

\begin{abstract}
Let $A$ be an annulus in the plane $\mathbb R^2$ and $g:A\rightarrow A$ be a boundary components preserving homeomorphism
which is distal and has no periodic points.
Then there is a continuous decomposition of $A$ into $g$-invariant circles
 such that all the restrictions of $g$ on them share a common  irrational rotation number and all these circles are
linearly ordered by the inclusion relation on the sets of bounded components of their complements in $\mathbb R^2$.
\end{abstract}

\keywords{}
\subjclass[2010]{}

\maketitle

\pagestyle{myheadings} \markboth{E. Shi, H. Xu, and Z. Yu}{The structure of periodic point free distal homeomorphisms}

\section{Introduction}

Recurrence is one of the most fundamental notions in the theory of dynamical system. There are various definitions to describe the recurrence behaviors of a point in a system, such as periodic point, almost periodic point, distal point,  recurrent point, regularly recurrent point,
and so on. There has been a considerable progress in studying the structures of the dynamical systems all points of which possess
some kind of recurrence.
\medskip

Montgomery \cite{M37} proved that every pointwise periodic homeomorphism on a connected
 manifold is periodic. For an infinite compact minimal metric system each point of which is regularly recurrent, Block and Keesling \cite{BK04}
 proved that it is topologically conjugate to an adding machine.  Shi, Xu, and Yu \cite{SXY23}
 showed that every pointwise recurrent expansive homeomorphism is topologically conjugate to a subshift of some symbol system, which  extends
 a classical result of Ma\~n\'e \cite{M79} for minimal expansive homeomorphisms. The structure of
  pointwise recurrent maps having the pseudo orbit tracing property is completely determined by Mai and Ye \cite{MY02}.
\medskip

There are also many interesting results around the structures of recurrent maps on low-dimensional spaces.
Mai \cite{Mai05} showed that a pointwise recurrent graph map is either topologically conjugate to an irrational rotation on the circle
 or of finite order.  Naghmouchi \cite{Na13} and Blokh \cite{Bl15} characterized  the structures of pointwise recurrent maps on
 uniquely arcwise connected curves. Kolev and P\'erou\`eme  \cite{KP98} showed recurrent
 homeomorphisms on compact surfaces with negative Euler characteristic are of finite order.
 Foland  \cite{Fo65} proved that any equicontinuous homeomorphism on a closed 2-cell is topologically
 conjugate either to  a reflection of a disk in a diameter or to a rotation of a disk about its center.
 Ritter  \cite{R78} further determined the structure of equicontinuous homeomorphisms on the
 2-sphere and annulus.  Oversteegen and Tymchatyn  \cite{OT90} proved that recurrent homeomorphisms on the plane are periodic.
\medskip

 The notion of distality was introduced by Hilbert for better understanding equicontinu
ity \cite{El58}. The study of minimal distal systems culminates in the beautiful structure theorem
 of Furstenberg \cite{Fu63}, which describes completely the relations between distality and
 equicontinuity for minimal systems. Considering minimal distal actions on compact man
ifolds, Rees \cite{Rees77} proved a sharpening of Furstenberg's structure theorem.

\medskip The aim of the paper is to study the structure of distal homeomorphisms on annulus without periodic
points. One may consult \cite{BC04, Botelho88, Fr88, Handel90} for many interesting related investigations.
\medskip

We obtain the following theorem.

\begin{thm}\label{main}
Let $A$ be an annulus in the plane $\mathbb R^2$ and $g:A\rightarrow A$ be a boundary components preserving homeomorphism
which is distal and has no periodic points.
Then there is a continuous decomposition of $A$ into $g$-invariant circles
 such that all the restrictions of $g$ on them share a common  irrational rotation number and all these circles are
linearly ordered by the inclusion relation on the sets of bounded components of their complements in $\mathbb R^2$.
\end{thm}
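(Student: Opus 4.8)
The strategy is to combine the structure theory of distal systems (the phase space splits into disjoint minimal sets) with the rotation-set theory of annulus homeomorphisms and with the topology of invariant essential continua. First the reductions. Since $g$ preserves each boundary circle and every orientation reversing circle homeomorphism has a fixed point, $g$ is orientation preserving (otherwise it would reverse the orientation of each $\partial_iA$). Then each $g|_{\partial_iA}$ is a distal, periodic point free, orientation preserving circle homeomorphism, so by the Poincar\'e classification it has irrational rotation number, and it is not of Denjoy type because two distinct points of a wandering interval form a proximal pair; hence $g|_{\partial_iA}$ is conjugate to an irrational rotation. Passing to the cover $\widetilde A=\mathbb R\times[0,1]$ and fixing a lift $\tilde g$ with $\tilde g(x+1)=\tilde g(x)+1$, the rotation set $\rho(\tilde g)\subset\mathbb R$ is a compact interval; by Franks' periodic orbit theorem any rational number in it would produce a periodic orbit, so $\rho(\tilde g)=\{\alpha\}$ with $\alpha$ irrational. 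Consequently $g$ is isotopic to the identity, both boundary rotation numbers equal $\alpha$, and every $g$-invariant essential subset of $A$ has rotation number $\alpha$.

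\medskip
Next I would prove that every minimal set is an essential circle. Because $(A,g)$ is distal and $A$ is compact, $A$ is a disjoint union of minimal sets, any two of which are disjoint. Let $M$ be a minimal set. If $M$ were inessential it would lie in a topological disk $D\subset\operatorname{int}A$ (cut $A$ along an essential arc contained in the complementary component of $M$ that meets $\partial A$); lifting $D$ to the pairwise disjoint copies $D+k$ and using that the entire $g$-orbit of $M$ stays inside $D$, one finds that $\tilde g^{\,n}$ carries the lift of $M$ over $D$ onto its copy over $D+nk_1$ for a fixed $k_1\in\mathbb Z$, so $M$ has integer rotation number $k_1\in\rho(\tilde g)=\{\alpha\}$, contradicting the irrationality of $\alpha$. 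Hence every minimal set is essential, so \emph{every} point of $A$ lies on an essential minimal set; a Janiszewski-type separation argument, together with the facts that a $g$-invariant essential subset has rotation number $\alpha$ and that a shrinking orbit of a nondegenerate subcontinuum would create a proximal pair, then shows $M$ is connected, has no complementary ``bays'', and satisfies $A\setminus M=U^+\sqcup U^-$ with $M=\partial U^+=\partial U^-$; that is, $M$ is a cofrontier, hence either a simple closed curve or an indecomposable continuum. The crux is to exclude the indecomposable case using distality: a nondegenerate cofrontier contains nondegenerate subcontinua $K$ of arbitrarily small diameter; the iterates $g^{\,n}(K)$ are subcontinua of the compact set $M$, so they cannot be pairwise disjoint, and chaining overlapping iterates and intersecting the nested images yields a $g^{\,j}$-invariant subcontinuum which, by the rotation-number argument applied to $g^{\,j}$ (whose rotation set is $\{j\alpha\}$) and by connectedness of $M$, must equal $M$; so the forward $g^{\,j}$-orbit of the small continuum $K$ is dense in $M$, and from this one produces $x\ne y$ in a single small $K$ with $\inf_n d(g^{\,n}x,g^{\,n}y)=0$, contradicting distality. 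Therefore $M$ is a circle, and $g|_M$ --- distal, periodic point free, of rotation number $\alpha$ --- is conjugate to $R_\alpha$.

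\medskip
It remains to assemble the decomposition. The set of $g$-invariant essential circles partitions $A$; two disjoint essential circles in $A$ are nested, so this family is linearly ordered by inclusion of the bounded complementary components in $\mathbb R^2$, with $\partial_0A$ and $\partial_1A$ as the least and greatest elements, and $g$ preserves the order since it preserves orientation and the annulus structure. For continuity I would check that the partition is upper semicontinuous: a Hausdorff subsequential limit of members of the family is a $g$-invariant essential continuum, hence by the previous step a member of the family and, since it contains the minimal set through the limit point, equal to it; the monotone assignment sending $x$ to the area enclosed between $\partial_0A$ and the circle through $x$ then descends to a continuous injection of the quotient onto a closed interval, which is precisely the asserted continuous decomposition into circles, all of rotation number $\alpha$. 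I expect the real obstacle to be the indecomposable-cofrontier case in the second step: such continua genuinely occur as invariant sets of periodic point free annulus homeomorphisms, so distality must do the work of excluding them --- equivalently, the content of the theorem is that a periodic point free distal homeomorphism of the annulus is equicontinuous.
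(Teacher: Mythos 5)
Your route is genuinely different from the paper's, and the key step has a real gap. The paper never tries to classify minimal sets $M\subset A$ head-on. Instead it extends $g$ radially to a homeomorphism $f$ of $\mathbb R^2$ fixing only the origin, and works on the boundary $\partial U$ of an $f$-invariant open disk $U$ via Carath\'eodory prime ends. Its key technical result (Proposition~3.4) says that if the prime-end rotation number of $f|_{\overline U}$ is irrational --- guaranteed by Cartwright--Littlewood since $f$ is nonwandering and periodic-point free on $\partial U$ --- then no minimal set in $\partial U$ is an adding machine: regular recurrence of a point $p$ would force iterates $f^{kn}(p)$ to stay in a small ball $B(p,\epsilon)$ while the corresponding prime end is carried across a cross-cut $\gamma$, giving $f^{ln}(x_i)$ simultaneously in two disjoint cross-sections $D$ and $D'$. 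From there: a $1$-dimensional component $K$ of $M$ is minimal and distal under some power of $f$, hence by Rees/Bronstein equicontinuous, hence by Halmos--von Neumann a rotation on a compact connected abelian group of dimension one, hence a circle or a solenoid, and by Bing solenoids are not planar, so $K$ is a circle. If instead $M$ has infinitely many non-separating components, the paper collapses them by Moore's theorem, obtaining a totally disconnected minimal distal set in the new $\partial\pi(U)$, i.e.\ an adding machine, contradiction. The partition is then built by Zorn on chains of invariant circles, with Proposition~5.2 (intermediate circle) closing Dedekind gaps, and Franks' theorem appears only at the end to equalise rotation numbers. This is \emph{not} your outline.

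The first half of your argument can be made to work. Franks plus distality rules out Denjoy on $\partial A$ and gives $\rho(\tilde g)=\{\alpha\}$ irrational; inessential minimal sets have integer rotation number, hence cannot occur; and once $M$ is essential, one essential component $C$ exists by Janiszewski, the essential components are nested, $g$ preserves the nesting order (it fixes each $\partial_iA$), so $g(C)=C$ and by minimality $C=M$ --- $M$ is connected and, having empty interior, one-dimensional. But the crux you flag --- excluding indecomposable cofrontiers --- is exactly where your sketch fails. Iterates $g^n(K)$ of a small subcontinuum inside a compact minimal set \emph{can} be pairwise disjoint, so the "cannot be pairwise disjoint" step is unjustified. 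And even granting an invariant subcontinuum, the passage to proximality is missing: density of $\bigcup_n g^{nj}(K)$ in $M$ gives no control on $\operatorname{diam} g^{nj}(K)$, so you never produce $x\ne y$ in $K$ with $\inf_n d(g^nx,g^ny)=0$. The honest way to finish from your position is precisely the structure theory the paper invokes: $M$ connected, $1$-dimensional, distal and minimal implies, by Rees' fibre-dimension formula and Bronstein's lifting theorem, that $(M,g)$ is equicontinuous; by Halmos--von Neumann it is a group rotation on a circle or a solenoid; Bing's non-planarity of solenoids forces a circle. That substitution, not the cofrontier chaining, is what carries the weight, and it is also exactly the content of your closing observation that the theorem amounts to equicontinuity of periodic-point-free distal annulus homeomorphisms.
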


The paper is organised as follows. In Section 2, we will introduce some concepts and facts in the theories of
dynamical system and topology. Specially, we will give the definitions of solenoid and adding machine from
the viewpoint of topological groups and recall some results around distal homeomorphisms. In Section 3, we will
show that there exists no adding machine contained in the boundary of an $f$-invariant open disk under some
appropriate assumptions. Based on this result, we show in Section 4 the existence of an $f$-invariant circle
in the boundary just mentioned. In Section 5, we show further that there are sufficiently many $f$-invariant
circles in the annulus. Relying on all these results, we show in Section 6 the existence of the expected
decompositions.

\section{Preliminaries}
In this section, we will recall some notions, notations, and elementary facts in the theories of
dynamical system and topology.

\subsection{Recurrence, minimal sets, and factors}

By a {\it dynamical system} we mean a pair $(X, f)$, where $X$ is a metric space
and $f:X\rightarrow X$ is a homeomorphism. For $x\in X$, the {\it orbit} of $x$ is
the set $O(x, f)\equiv\{f^i(x): i\in\mathbb Z\}$. If there is some $n> 0$ such that
$f^n(x)=x$, then $x$ is called a {\it periodic point} of $f$ and the minimal such
$n$ is called the {\it period} of $x$. A periodic point $x$ of period $1$ is called
a {\it fixed point}, that is $f(x)=x$. A subset $A$ of $\mathbb Z$ is
{\it syndetic} if there is $l>0$ with $A\cap \{p, p+1, \ldots, p+l\}\not=\emptyset$ for any $p\in\mathbb Z$.
We call $x$ an {\it almost periodic point} if for any open neighborhood $U$ of $x$,
the set $N(x, U)\equiv\{i:f^i(x)\in U\}$ is syndetic. If there is a sequence of positive integers
$n_1<n_2<\cdots$ such that $f^{n_i}(x)\rightarrow x$, then we call $x$ a {\it recurrent point};
and if for any open neighborhood $U$ of $x$, there always exists $n>0$ such that $f^n(U)\cap U\not=\emptyset$,
then we call $x$ a {\it nonwandering point}. Clearly, an alomost periodic point is recurrent and a recurrent point is nonwandering.
If each point of $X$ is nonwandering, then we call $f$ {\it nonwandering}.
\medskip

 A subset $S$ of $X$ is  {\it $f$-invariant}
if $f(S)=S$; we use $f|_S$ to denote the restriction of $f$ to $S$.
If $S$ is an $f$-invariant nonempty closed subset of $X$ and contains no proper
$f$-invariant closed subset, then we call $S$ a {\it minimal set} of $f$. If $X$ is a minimal set,
we call the system $(X, f)$  is {\it minimal}. It is clear
from the definition that $S$ is minimal if and only if for each $x\in S$, $O(x, f)$
is dense in $S$.  By an argument of Zorn's lemma, we have that if $X$ is compact, then there always exists a minimal set
of $f$. We have known that each point of a compact minimal set is almost periodic (see e.g. \cite[Chap.1-Theorem 1]{A88}).
\medskip

For any two dynamical systems $(X, f)$ and $(Y, g)$, if there is a continuous surjection $\phi:X\rightarrow Y$
such that $\phi\circ f=g\circ\phi$, then we say that $(Y, g)$ is a {\it factor} of $(X, f)$ and
$(X, f)$ is an {\it extension} of $(Y, g)$; we call $\phi$ a {\it factor map} or a {\it semiconjugation}
between $(X, f)$ and $(Y, g)$; if $\phi$ is a homeomorphism, then we call $(X, f)$ and $(Y, g)$ are
{\it topologically conjugate}. Clearly, if $M$ is a minimal set of $f$, then $\phi(M)$ is a minimal set of $g$.
It is well known that if $\mathbb S^1$ is the unit circle and
$f:\mathbb S^1\rightarrow \mathbb S^1$ is an orientation preserving homeomorphism without periodic points,
then $(\mathbb S^1, f)$ is semiconjugate  to a rigid minimal rotation on $\mathbb S^1$ (see e.g. \cite[Theorem 6.18]{W82}).
\medskip

A topological space $U$ is called an {\it open disk} if it is homeomorphic to the unit open disk in the plane $\mathbb R^2$.
By Riemann mapping theorem, we know that  an open subset $U$ of $\mathbb C$ is an open disk if and only if it is simply connected.

\medskip The following theorem is implied by Brouwer' lemma (see e.g. \cite{Fa87, L06}).

\begin{thm}\label{brouwer}
If $U$ is an open disk and $f:U\rightarrow U$ is an orientation-preserving nonwandering homeomorphism, then $f$ has a fixed point in $U$.
\end{thm}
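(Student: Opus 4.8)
The plan is to recognise this statement as (a mild repackaging of) the Brouwer plane translation theorem and to reduce to it. First I would fix a homeomorphism $h\colon U\to\mathbb R^2$ and pass to the conjugate map $\tilde f:=h\circ f\circ h^{-1}\colon\mathbb R^2\to\mathbb R^2$. Being nonwandering and possessing a fixed point are both invariant under topological conjugacy, and the orientation-preserving/reversing type of a surface homeomorphism is multiplicative under composition and unchanged under taking inverses, so $\tilde f$ is again orientation-preserving regardless of the type of $h$ (one may in any case choose $h$ orientation-preserving). Hence it suffices to prove: an orientation-preserving nonwandering homeomorphism of $\mathbb R^2$ has a fixed point, or, contrapositively, that a fixed-point-free orientation-preserving homeomorphism of the plane has no nonwandering point.

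So assume $\tilde f$ is orientation-preserving and fixed-point free, and fix an arbitrary $x\in\mathbb R^2$. The key input is Brouwer's lemma on translation arcs: there is an embedded arc $\gamma$ from $x$ to $\tilde f(x)$ with $\tilde f(\gamma)\cap\gamma=\{\tilde f(x)\}$, and applying $\tilde f^{-1}$ also $\gamma\cap\tilde f^{-1}(\gamma)=\{x\}$. The substantive part of Brouwer's argument is the combinatorial fact that the iterates $\{\tilde f^n(\gamma)\}_{n\in\mathbb Z}$ are arranged so that non-consecutive arcs are disjoint while consecutive ones meet only at a shared endpoint; their union $\Lambda$ is then a properly embedded line, it is $\tilde f$-invariant because $\tilde f(\Lambda)=\bigcup_n\tilde f^{n+1}(\gamma)=\Lambda$, and $\tilde f|_\Lambda$ is a fixed-point-free homeomorphism of $\mathbb R$, hence a translation-like map with no recurrence. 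Consequently the two open complementary components of $\Lambda$ are $\tilde f$-invariant, and with the usual uniform estimates one thickens $\Lambda$ near $x$ to a neighbourhood $N$ of $x$ whose forward and backward $\tilde f$-iterates are pairwise disjoint. Thus $x$ is wandering; as $x$ was arbitrary, $\tilde f$ has no nonwandering point, contradicting the hypothesis — so $\tilde f$, and therefore $f$, must have a fixed point.

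The only genuine obstacle is the proof of Brouwer's translation-arc lemma itself: both the existence of a translation arc and, more seriously, the disjointness of its non-consecutive iterates together with the control needed to produce a wandering neighbourhood of the endpoint $x$. These are exactly the contents of the cited references (for instance Fathi's orbit-closing proof and Le Calvez's exposition), so in the paper I would simply quote the plane translation theorem in the form ``an orientation-preserving nonwandering homeomorphism of $\mathbb R^2$ has a fixed point'' and carry out only the one-paragraph conjugation-and-reduction argument above.
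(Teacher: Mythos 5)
Your proposal is correct and takes essentially the same route as the paper: the paper gives no proof at all, merely observing that the statement ``is implied by Brouwer's lemma'' and citing Fathi \cite{Fa87} and Le Calvez \cite{L06}, which is exactly where your reduction lands. The extra material you include --- conjugating $U$ to $\mathbb R^2$ and sketching the translation-arc/Brouwer-line argument --- is a sound elaboration of that citation rather than a departure from it.
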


\subsection{Continuous and semi-continuous decompositions}

Let $(X, \mathcal T)$ be a topological space. A {\it partition} of $X$ is  a collection $\mathcal D$  of
nonempty, mutually disjoint subsets of $X$ such that $\cup\mathcal D=X$. Define $\pi:X\rightarrow \mathcal D$
by letting $\pi(x)$ be the unique $D\in\mathcal D$ such that $x\in D$ for each $x\in X$. We endow
$\mathcal D$ with the largest topology so that $\pi$ is continuous, that is $\mathcal U\subset\mathcal D$ is open
iff $\cup \mathcal U\in\mathcal T$. The topological space $\mathcal D$ so defined is called the {\it decomposition}
of $X$. We also call $\mathcal D$ the {\it quotient space} of $X$ by identifying each element of $\mathcal D$ into a point
and call $\pi$ the {\it quotient map}.
The partition $\mathcal D$ is called {\it upper semi-continuous} provided that whenever $D\in\mathcal D$,
$U\in \mathcal T$, and $D\subset U$, there exists $V\in\mathcal T$ with $D\subset V$ such that if $A\in\mathcal D$
and $A\cap V\not=\emptyset$, then $A\subset U$.
\medskip

Now suppose $X$ is a compact metric space with metric $d$. Let $2^X$ be the collection of all nonempty closed subsets
of $X$ and let $C(X)=\{A\in 2^X: A \ \mbox{is connected}\}$. The {\it Hausdorff metric} $H_d$ on $2^X$
is defined by $H_d(A, B)=\inf\{\epsilon: A\subset B_d(B, \epsilon)\ \mbox{and}\ B\subset B_d(A, \epsilon)\}$
for each $A, B\in 2^X$. Then $2^X$ and $C(X)$ are both compact metric spaces with respect to $H_d$, called
the {\it hyperspaces} of $X$ (see e.g. \cite[Theorems 4.13 and 4.17]{N92}).
Let $\mathcal D$ be a partition of $X$ such that each element of $\mathcal D$ is closed. The partition
$\mathcal D$ is called {\it continuous} if the quotient map $\pi:X\rightarrow \mathcal D$,
thought of as a map from $X$ into $2^X$, is continuous.
\medskip

If $\mathcal D$ is a partition of $X$, then it induces an equivalence relation $R\subset X\times X$ by
defining $(x, y)\in R$ if $\{x, y\}\subset A$ for some $A\in\mathcal D$. The relation $R$ is called
a {\it closed relation} if it is a closed subset of $X\times X$.
\medskip

The following proposition can be seen in \cite[Proposition 2.2]{HJ19}.

\begin{prop}\label{closed-relation}
Let $X$ be a compact metric space. Let $\mathcal D$ be a partition of $X$
and $R$ be the equivalence relation induced by $\mathcal D$. If $R$ is closed, then $\mathcal D$ is upper semi-continuous.
\end{prop}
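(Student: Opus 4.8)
The plan is to argue by contradiction using the compactness of $X$. Suppose $R$ is closed but $\mathcal D$ fails to be upper semi-continuous. Then there exist an element $D\in\mathcal D$ and an open set $U$ with $D\subset U$ such that for \emph{every} open neighborhood $V$ of $D$ there is some $A\in\mathcal D$ meeting $V$ but not contained in $U$. Specializing $V$ to the metric neighborhoods $V_n=\{x\in X: d(x,D)<1/n\}$, I would obtain for each $n$ an element $A_n\in\mathcal D$ together with points $x_n\in A_n\cap V_n$ and $y_n\in A_n\setminus U$.

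Next I would extract limits. Since $d(x_n,D)<1/n$ and $D$ is closed (hence compact), choose $d_n\in D$ with $d(x_n,d_n)<1/n$. By compactness of $X$, after passing to a subsequence we may assume $x_n\to x$, $y_n\to y$, and $d_n\to z$ for some $x,y,z\in X$. Because $D$ is closed, $z\in D$; because $d(x_n,d_n)\to 0$, we get $x=z\in D$, so in particular $x\in U$. On the other hand $y_n\in X\setminus U$ for all $n$ and $X\setminus U$ is closed, so $y\notin U$; in particular $x\ne y$.

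Finally I would invoke the hypothesis that $R$ is closed. For each $n$ the points $x_n$ and $y_n$ lie in the common element $A_n\in\mathcal D$, so $(x_n,y_n)\in R$; passing to the limit, $(x,y)\in R$. Thus $x$ and $y$ lie in a common element $A\in\mathcal D$. Since $\mathcal D$ is a partition and $x\in D\cap A$, we must have $A=D$, whence $y\in D\subset U$. This contradicts $y\notin U$, and the contradiction shows that $\mathcal D$ is upper semi-continuous.

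As for difficulty, there is no serious obstacle here: the argument is a routine compactness/closed-graph manipulation. The only points requiring a little care are the selection of the witnessing points $x_n,y_n$ from the negation of the defining property of upper semi-continuity, and the use of the partition property to force the limiting cell to be exactly $D$ rather than merely some cell meeting $D$ — the latter being precisely where disjointness of distinct members of $\mathcal D$ is used.
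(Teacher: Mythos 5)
Your proof is correct. The paper itself does not prove this proposition---it merely cites it as \cite[Proposition 2.2]{HJ19}---so there is no in-paper argument to compare against, but your compactness argument is the standard and natural one. The only step left implicit is the assertion that $D$ is closed. This is not built into the paper's definition of a partition (blocks are just nonempty and mutually disjoint), so it deserves a word: for any fixed $x_0\in D$ one has $D=\{y\in X:(x_0,y)\in R\}$, which is the preimage of the closed set $R$ under the continuous map $y\mapsto(x_0,y)$ and is therefore closed (hence compact in $X$). With that one-line justification inserted before you pass to the limit $z\in D$, the argument is complete; all other steps---the negation of upper semi-continuity, extraction of $x_n,y_n,d_n$, the subsequential limits, the passage to the limit in the closed relation $R$, and the use of disjointness to force $A=D$---are sound.
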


Proposition \ref{closed-relation} together with the compactness of hyperspaces implies the following proposition.

\begin{prop}\label{semi-con}
Let $f:\mathbb R^2\rightarrow \mathbb R^2$ be a homeomorphism and $M\subset \mathbb R^2$
be a compact minimal set of $f$. Let $\mathcal M$ be the set of all components of $M$ and let
$\mathcal M'=\{\{x\}: x\in \mathbb R^2\setminus M\}$. Then $\mathcal M\cup\mathcal M'$
is an upper semi-continuous decomposition of $\mathbb R^2$.
\end{prop}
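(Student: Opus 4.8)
The plan is to verify the hypotheses of Proposition~\ref{closed-relation} for the partition $\mathcal M\cup\mathcal M'$ and then invoke it, while observing that a single proposition governs the behaviour at points of $M$ and at points off $M$ separately. Concretely, let $R\subset\mathbb R^2\times\mathbb R^2$ be the equivalence relation induced by $\mathcal M\cup\mathcal M'$: a pair $(x,y)$ lies in $R$ iff either $x=y\notin M$, or $x,y\in M$ and $x,y$ lie in the same connected component of $M$. I would first note that $R$ is reflexive, symmetric and transitive and that its classes are exactly the elements of $\mathcal M\cup\mathcal M'$, so once $R$ is shown to be closed, Proposition~\ref{closed-relation} applies \emph{provided} we have a compact ambient space. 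Since $\mathbb R^2$ is not compact, the cleanest route is to pass to the one-point compactification $\mathbb S^2=\mathbb R^2\cup\{\infty\}$, extend the partition by adjoining the singleton $\{\infty\}$ (note $\infty\notin M$ because $M$ is a compact subset of $\mathbb R^2$), check that the enlarged relation $\widehat R$ is closed in $\mathbb S^2\times\mathbb S^2$, apply Proposition~\ref{closed-relation} on $\mathbb S^2$, and then restrict the resulting upper semi-continuous decomposition back to the open set $\mathbb R^2$, which preserves upper semi-continuity because $\mathbb R^2$ is open in $\mathbb S^2$ and none of the partition elements other than $\{\infty\}$ meets the point at infinity.

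The heart of the argument is therefore the closedness of $\widehat R$. Suppose $(x_n,y_n)\in\widehat R$ with $(x_n,y_n)\to(x,y)$ in $\mathbb S^2\times\mathbb S^2$. If infinitely many of the pairs are the "off-$M$ diagonal" type $x_n=y_n$, then $x=y$; if moreover $x=y\in M$ then $x$ and $y$ trivially lie in the same component of $M$, and if $x=y\notin M$ the pair is in $\widehat R$ by definition, so this case is fine. The substantive case is when all $(x_n,y_n)$ satisfy $x_n,y_n\in M$ and lie in a common component $C_n$ of $M$; then $x,y\in M$ since $M$ is closed in $\mathbb S^2$, and we must produce a component of $M$ containing both. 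Here I would use that the components $C_n$ are elements of the hyperspace $C(\mathbb S^2)$, which is compact; passing to a subsequence, $C_n\to K$ for some $K\in C(\mathbb S^2)$ with $K\subset M$ (as $M$ is closed), and $K$ is a continuum. Since $x_n\in C_n$, $y_n\in C_n$, and $C_n\to K$ in the Hausdorff metric, we get $x,y\in K$. A subcontinuum of $M$ containing $x$ is contained in the component of $M$ through $x$; hence $x$ and $y$ lie in the same component of $M$, i.e. $(x,y)\in\widehat R$. This shows $\widehat R$ is closed.

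Note that the minimality of $M$ and the dynamics $f$ play essentially no role in the statement itself beyond guaranteeing that $M$ is a nonempty compact set; the proposition is really a topological fact about an arbitrary compact subset of $\mathbb R^2$ whose components are taken together with the singletons of the complement. (The mention of $f$ and minimality is there because that is the context in which the decomposition will be used later in the paper.) The main obstacle I anticipate is purely bookkeeping: handling the non-compactness of $\mathbb R^2$ correctly, since Proposition~\ref{closed-relation} is stated for compact $X$, and making sure that after compactifying, adding $\{\infty\}$ as its own class, applying the proposition, and restricting, the resulting decomposition of $\mathbb R^2$ is genuinely the one asserted and is still upper semi-continuous. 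An alternative that avoids compactification is to prove directly from the definition of upper semi-continuity: given $D\in\mathcal M\cup\mathcal M'$ and an open $U\supset D$, if $D$ is a singleton $\{x\}$ with $x\notin M$ then $V=U\setminus M$ works; if $D=C$ is a component of $M$, one uses the standard fact that in a compact metric space the component of a point equals the intersection of the clopen-in-$M$ sets containing it, extracts a relatively clopen subset $W$ of $M$ with $C\subset W$ and $W\subset U$, and then thickens $W$ to an open set $V$ of $\mathbb R^2$ with $V\cap M=W$ and $V\subset U$, using that $M\setminus W$ is compact and disjoint from $\overline{W}$. Either route works; I would present the second to keep everything within $\mathbb R^2$, flagging the clopen-decomposition fact about components as the one external input.
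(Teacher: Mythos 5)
Your proof is correct and follows the same route the paper has in mind: the paper offers no proof, only the remark that ``Proposition~\ref{closed-relation} together with the compactness of hyperspaces implies the following proposition,'' and your argument (Hausdorff-limit of components is a subcontinuum of $M$, hence lies in a single component, so the induced relation is closed; then apply Proposition~\ref{closed-relation}) is exactly how one cashes that remark out. You have also correctly spotted and repaired the one genuine gap in the paper's one-liner—Proposition~\ref{closed-relation} is stated for compact $X$ while $\mathbb R^2$ is not—via the one-point compactification, and your alternative direct argument using clopen neighbourhoods of components in $M$ is likewise sound.
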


A {\it continuum} is a connected compact metric space. If $X$ is a continuum contained in the plane
$\mathbb R^2$ such that $\mathbb R^2\setminus X$ is connected, then we call that $X$
{\it does not separate the plane}. The following theorem is due to R. L. Moore\
(see e.g. \cite[p.533, Theorem 8]{K66} for a slightly more general form).

\begin{thm}\label{moore}
The space of a upper semi-continuous decomposition of $\mathbb R^2$ into continua, which do not separate  $\mathbb R^2$,
is homeomorphic to $\mathbb R^2$.
\end{thm}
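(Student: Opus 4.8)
The statement is a classical theorem of R.~L.~Moore, and the plan is to reduce it to the corresponding statement on the $2$-sphere and then to verify the Moore--Zippin topological characterisation of $S^2$ for the quotient space. First I would compactify: identify $\mathbb R^2$ with $S^2\setminus\{\infty\}$ and set $\mathcal D^\ast=\mathcal D\cup\{\{\infty\}\}$, a partition of $S^2$. Since every member of $\mathcal D$ is a continuum, hence compact, the equivalence relation induced by $\mathcal D$ is closed in $\mathbb R^2\times\mathbb R^2$, and one checks that adjoining the diagonal point $(\infty,\infty)$ keeps it closed in $S^2\times S^2$: if some member of $\mathcal D$ met every neighbourhood of $\infty$ while also clustering at a finite point, then, its members being compact, upper semi-continuity of $\mathcal D$ at that point would fail. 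Hence by Proposition~\ref{closed-relation} (and its converse for decompositions into compacta) $\mathcal D^\ast$ is upper semi-continuous. Moreover a continuum $D\subset\mathbb R^2$ with $\mathbb R^2\setminus D$ connected has $S^2\setminus D$ connected too, since $\infty$ lies in the closure of the unbounded complementary component; so no member of $\mathcal D^\ast$ separates $S^2$. Finally the quotient map $\pi^\ast:S^2\to Q:=S^2/\mathcal D^\ast$ is closed with $(\pi^\ast)^{-1}(\pi^\ast(\infty))=\{\infty\}$, so its restriction to $\mathbb R^2$ is the quotient map onto the decomposition space $\mathbb R^2/\mathcal D$; thus it is enough to prove $Q\cong S^2$ and then delete the single point $\pi^\ast(\infty)$.

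Next I would record that $Q$ is a nondegenerate Peano continuum. It is nondegenerate because $\mathcal D^\ast$ has at least two members (no continuum equals $\mathbb R^2$). Since $\pi^\ast$ is a perfect map --- closed with compact fibres --- from a compact metric space, $Q$ is metrizable, compact, connected and locally connected, that is, a nondegenerate Peano continuum.

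It then remains to verify the Moore--Zippin characterisation for $Q$: a nondegenerate Peano continuum is homeomorphic to $S^2$ provided it contains a simple closed curve, no arc separates it, and every simple closed curve in it separates it (into exactly two pieces with that curve as common frontier). Exhibiting a simple closed curve is routine. For the separation properties I would use that $\pi^\ast$ is monotone (its point inverses are continua), so the preimage of any subcontinuum $C$ of $Q$ --- in particular of an arc or of a simple closed curve --- is a subcontinuum of $S^2$, on which the members of $\mathcal D^\ast$ induce an upper semi-continuous decomposition with quotient $C$. The crucial local fact is that a \emph{single} non-separating subcontinuum $K$ of $S^2$ is cellular: its complement is connected, hence simply connected, hence an open disc, so collapsing $K$ to a point returns $S^2$ by a homeomorphism which is the identity off any prescribed neighbourhood of $K$. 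Feeding this into a shrinking/limiting argument over the (possibly uncountable) family $\mathcal D^\ast$ --- one shrinks the members, rather than collapsing them one at a time --- one shows that the number of complementary components of a subcontinuum of $S^2$ equals that of its $\pi^\ast$-image; since an arc does not separate $S^2$ and, by the Jordan curve theorem, a simple closed curve separates it into exactly two pieces, the required properties of $Q$ follow, whence $Q\cong S^2$. The main obstacle is precisely this last step --- controlling how separation behaves under the quotient, uniformly over all decomposition elements --- which is the genuine content of Moore's theorem and the reason the hypothesis that the elements do not separate $\mathbb R^2$ cannot be dropped.
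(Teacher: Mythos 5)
The paper does not prove this statement; it is stated as a classical theorem of R.~L.~Moore with a reference to Kuratowski (\cite[p.533, Theorem 8]{K66}), so there is no in-paper argument to compare against. Your sketch, however, can be assessed on its own merits.

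The general strategy you lay out --- compactify $\mathbb R^2$ to $S^2$, pass to $\mathcal D^\ast=\mathcal D\cup\{\{\infty\}\}$, observe that the quotient is a Peano continuum, and then invoke the Zippin topological characterization of $S^2$ --- is indeed a standard textbook route to Moore's theorem, and the preliminary reductions are essentially right. Two small inaccuracies in the set-up: closedness of the induced relation does not follow merely from compactness of the elements (compactness plus upper semi-continuity is what you need, and that is equivalent to closedness of the relation for decompositions into compacta); and showing $\mathcal D^\ast$ is upper semi-continuous at $\infty$ needs the observation that a sequence of elements meeting every neighbourhood of $\infty$ and also clustering at a finite point $q$ would, by upper semi-continuity of $\mathcal D$ at $q$, eventually be trapped in a bounded neighbourhood of the compact element through $q$, which is impossible. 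The claim that the number of complementary components of a subcontinuum $C$ of $Q$ equals that of $\pi^{-1}(C)$ in $S^2$ is correct and follows directly from $\pi$ being a monotone closed quotient map (components of the saturated open set $S^2\setminus\pi^{-1}(C)$ are saturated, so their $\pi$-images partition $Q\setminus C$); no shrinking argument is needed for that bijection.

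But this reduction does not finish the proof; it merely transfers it. What remains is exactly the hard statement: for an arc $A\subset Q$, the saturated continuum $\pi^{-1}(A)\subset S^2$ does not separate $S^2$, and for a simple closed curve $J\subset Q$, the continuum $\pi^{-1}(J)$ separates $S^2$ into precisely two pieces. These preimages are not arcs or circles --- they are unions of possibly uncountably many non-separating continua strung along an arc or circle --- and the fact that no individual element separates does not by itself control what a saturated union over a subcontinuum of $Q$ does. The one-element cellularity observation is a useful ingredient, but feeding it ``into a shrinking/limiting argument'' is not an argument; it is precisely the content of the Moore shrinking criterion (or, in other treatments, a careful separation/Janiszewski-type analysis), and neither is carried out. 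Likewise, the existence of a simple closed curve in $Q$ is asserted to be routine but not exhibited; $\pi$ of a circle in $S^2$ need not be a simple closed curve, since distinct points of the circle may lie in the same fibre. In short: the outline is sound, the preparatory steps are correct modulo wording, but the decisive step that makes the theorem true --- and that uses the non-separating hypothesis in an essential, quantitative way --- is gestured at rather than proved.
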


\subsection{Solenoids and adding machines}

Let $X$ be a compact metric space with metric $d$ and $f:X\rightarrow X$
 be a homeomorphism. We say $(X, f)$ is {\it equicontinuous} if for each $\epsilon>0$,
 there is a $\delta>0$ such that $d(f^i(x), f^i(y))<\epsilon$ for any $i\in\mathbb Z$, whenever
 $d(x, y)<\delta$. Let $K$ be a compact abelian metric group and $a\in K$. The {\it rotation} $\rho_a:K\rightarrow K$ is defined by
 $\rho_a(x)=ax$ for any $x\in K$. Clearly, if $\{a^n:n\in \mathbb Z\}$ is dense in $K$, then
 $(K, \rho_a)$ is minimal and equicontinuous.
\medskip

 The following theorem shows that minimal rotations on compact abelian metric groups are the only
 equicontinuous minimal systems (see \cite[Theorem 5.18]{W82}).

 \begin{thm}[Halmos-von Neumann]\label{H-N}
 Let $X$ be a compact metric space and $f:X\rightarrow X$ be a minimal and equicontinuous homeomorphism. Then
 $(X, f)$ is topologically conjugate to a minimal rotation on a compact abelian metric group.
 \end{thm}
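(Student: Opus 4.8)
The plan is to realize $(X,f)$ inside its \emph{Ellis (enveloping) group}. Let $G$ be the closure of $\{f^n:n\in\mathbb Z\}$ in the space $C(X,X)$ of continuous self-maps of $X$, taken with the uniform metric $d_\infty$. Equicontinuity of the family $\{f^n:n\in\mathbb Z\}$ (which is exactly the hypothesis) together with the Arzel\`a--Ascoli theorem shows this family is precompact, so $G$ is a compact metric space; moreover $G$, being the uniform closure of an equicontinuous family, is itself equicontinuous. One checks that composition is \emph{jointly} continuous on $G$: if $g_k\to g$ and $h_k\to h$ uniformly in $G$, then $d(g_k(h_k(x)),g(h(x)))\le d(g_k(h_k(x)),g_k(h(x)))+d_\infty(g_k,g)$, and the first summand tends to $0$ uniformly in $x$ by equicontinuity of $\{g_k\}$. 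Since $\{f^n\}$ is abelian and contains $\mathrm{id}=f^0$, it follows that $G$ is a compact abelian topological semigroup with identity.

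Next I would upgrade $G$ to a \emph{group}. Given $g=\lim_k f^{n_k}\in G$, precompactness of $\{f^m:m\in\mathbb Z\}$ lets me pass to a subsequence along which $f^{-n_{k_j}}\to h\in G$; joint continuity of composition then forces $g\circ h=\lim_j f^{n_{k_j}}\circ f^{-n_{k_j}}=\mathrm{id}$ and likewise $h\circ g=\mathrm{id}$, so every element of $G$ is invertible within $G$. Continuity of inversion is then automatic by compactness (any subsequential limit of $g_k^{-1}$ must equal $g^{-1}$). Thus $G$ is a compact abelian metrizable topological group, and the evaluation $(g,x)\mapsto g(x)$ is a jointly continuous action of $G$ on $X$ (same estimate as above).

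Now I bring in minimality. For a base point $x_0\in X$, the orbit $Gx_0$ is compact, hence closed, and contains the dense set $\{f^n(x_0):n\in\mathbb Z\}$, so $Gx_0=X$ and the action is transitive. Let $H=\{g\in G:g(x_0)=x_0\}$; this is a closed subgroup, hence compact, and normal because $G$ is abelian, so $G/H$ is a compact abelian metrizable group. The orbit map $G/H\to X$, $gH\mapsto g(x_0)$, is a continuous bijection from a compact space onto a Hausdorff space, hence a homeomorphism. Writing $a=f\in G$, the identity $f(g(x_0))=(ag)(x_0)$ shows that this homeomorphism carries $f$ to the rotation $\rho_{aH}$ on $G/H$. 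Finally $\{a^n:n\in\mathbb Z\}$ is dense in $G$ by construction, so $\{a^nH\}$ is dense in $G/H$ and $\rho_{aH}$ is minimal (alternatively, topological conjugacy preserves minimality). This is the claimed structure.

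The Arzel\`a--Ascoli step and the joint-continuity verifications are routine; the one genuinely delicate point is showing $G$ is a group, i.e.\ that inverses survive in the \emph{uniform} closure — this is precisely where equicontinuity is indispensable, since it is what upgrades compactness in the product topology on $X^X$ to compactness in $C(X,X)$ and makes composition jointly continuous. One should also keep in mind the standard fact that the quotient of a compact metrizable group by a closed subgroup is again compact and metrizable.
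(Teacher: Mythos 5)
Your proof is correct, and it is the standard Ellis enveloping-group argument: take the uniform closure $G$ of $\{f^n\}$, use equicontinuity plus Arzelà--Ascoli to get compactness, upgrade $G$ to a compact abelian metrizable group, and transport the left-translation by $f$ to $X\cong G/\mathrm{Stab}(x_0)$. This is exactly the approach taken in the reference the paper cites for this theorem (Walters, Theorem~5.18), so there is nothing to add.
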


 For each positive integer $i$, let $K_i$ be a compact metric group and let $f_i:K_{i+1}\rightarrow K_i$
 be a surjective continuous group homomorphism. The {\it inverse limit} of $\{K_i, f_i\}$ is
 $$
 \lim\limits_{\leftarrow}\{K_i, f_i\}\equiv\{(x_i)\in\prod_{i=1}^\infty K_i: x_i=f_i(x_{i+1})\},
 $$
 which is a compact metric group under the multiplication $``\cdot"$ defined by $(x_i)\cdot (y_i)=(x_iy_i)$.
 If each $K_i$ is the unit circle $\{z\in\mathbb C: |z|=1\}$ and $\lim\limits_{\leftarrow}\{K_i, f_i\}$
 is not the circle, then we call $\lim\limits_{\leftarrow}\{K_i, f_i\}$
  a {\it solenoid}; if each $K_i$ is a finite cyclic group and $\lim\limits_{\leftarrow}\{K_i, f_i\}$
 is not finite, then we call  $\lim\limits_{\leftarrow}\{K_i, f_i\}$ an {\it adding machine}. As topological
 spaces,  a solenoid is a homogeneous indecomposable circle-like continuum and an adding machine is
 a Cantor set. We also call a minimal rotation on an adding machine an adding machine.
 \medskip

Since every compact metric group is an inverse limit of compact Lie groups (see e.g. \cite[Chap. 4.6-4.7]{MZ55}), and
the only connected Lie groups of dimension $1$ is the circle group and the only
Lie groups of dimension $0$ are finite groups, the following proposition is clear.

 \begin{prop}\label{mono}
 If $K$ is a connected compact metric group of dimension $1$, then it is either a circle or a solenoid;
 if $K$ is a  compact metric group of dimension $0$ and has a dense cyclic subgroup, then it is either
 a finite cyclic group or an adding machine.
 \end{prop}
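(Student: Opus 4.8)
The plan is to feed the two structural facts quoted just before the statement into a short bookkeeping argument about covering dimension along an inverse system. Recall that, $K$ being a compact \emph{metric} group, the cited approximation theorem lets us write $K=\lim\limits_{\leftarrow}\{K_i,f_i\}$ with each $K_i$ a compact Lie group and each bonding homomorphism $f_i\colon K_{i+1}\to K_i$ surjective; concretely one may take $K_i=K/N_i$ for a decreasing sequence of closed normal subgroups with $\bigcap_i N_i=\{e\}$, so that moreover every projection $p_i\colon K\to K_i$ is surjective. We will also use that $K\to K/N_i$ has a local product structure over the manifold $K/N_i$, whence $\dim K=\dim(K/N_i)+\dim N_i$ and in particular $\dim K_i\le\dim K$.

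\emph{The one-dimensional connected case.} Suppose $K$ is connected and $\dim K=1$. Each $K_i=p_i(K)$ is then a connected compact Lie group with $\dim K_i\le 1$, hence is trivial or a circle. If $\dim K_i=0$ for every sufficiently large $i$, then each such $K_i$ is a point, and by surjectivity of the bonding maps \emph{every} $K_i$ is trivial, forcing $K$ to be trivial --- contradicting $\dim K=1$. Hence $\dim K_i=1$ for cofinally many $i$; restricting to the corresponding cofinal subsystem (which has the same inverse limit) we may assume every $K_i$ is the unit circle. Then $K=\lim\limits_{\leftarrow}\{K_i,f_i\}$ with every $K_i$ the unit circle, so by the definition recalled above $K$ is the circle if this limit is the circle and a solenoid otherwise.

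\emph{The zero-dimensional monothetic case.} Suppose $\dim K=0$; then $\dim K_i\le 0$, i.e.\ each $K_i$ is a finite group. If $a\in K$ generates a dense subgroup, then the cyclic subgroup generated by $p_i(a)$ is dense in $K_i$ and therefore, $K_i$ being finite, equals $K_i$, so each $K_i$ is a finite cyclic group. Therefore $K=\lim\limits_{\leftarrow}\{K_i,f_i\}$ with each $K_i$ finite cyclic, which by definition is an adding machine unless $K$ is finite. And if $K$ is finite, the orders $|K_i|=|K/N_i|$ are non-decreasing and bounded by $|K|$, hence eventually constant; a surjection between finite groups of equal order is an isomorphism, so the bonding maps are eventually isomorphisms and $K$ is isomorphic to one of the finite cyclic groups $K_i$. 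This proves both assertions.

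Granting the two quoted facts, nothing here is more than dimension bookkeeping together with the elementary observation that a dense subgroup of a finite group is the whole group, so I do not anticipate a real obstacle. The only mildly delicate point is the step, in the first case, from ``$\dim K_i\le 1$ for all $i$'' to ``$K_i$ is a circle for cofinally many $i$'': this is exactly where the hypothesis $\dim K=1$ (rather than $\dim K=0$) is used, and it is what separates the circle/solenoid alternative from the trivial group.
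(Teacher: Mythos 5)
Your proof is correct and takes the same route the paper does: the paper merely remarks, before the statement, that the proposition is ``clear'' from the Montgomery--Zippin approximation by compact Lie groups together with the classification of connected one-dimensional and of zero-dimensional Lie groups. What you have done is fill in, carefully and correctly, the dimension bookkeeping, the reduction to a cofinal subsystem of circles, the passage of the dense cyclic generator to the finite quotients, and the degenerate cases ($K$ trivial, $K$ finite) that the paper leaves implicit.
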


A {\it curve} is an $1$-dimensional continuum. The following corollaries are immediate from Theorem \ref{H-N} and Proposition \ref{mono}.

 \begin{cor}\label{1-dim}
 Let $X$ be a curve and $f:X\rightarrow X$ be a minimal equicontinuous homeomorphism. Then $(X, f)$ is
 topologically conjugate to a minimal rotation either on the circle or on a solenoid.
 \end{cor}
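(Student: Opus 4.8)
The plan is to invoke the Halmos--von Neumann theorem to replace $(X,f)$ by an algebraic model and then read off the possible group from its topological dimension. Since $X$ is a curve it is in particular a compact metric space, and $f$ is minimal and equicontinuous by hypothesis, so Theorem \ref{H-N} yields a topological conjugacy $\phi:X\rightarrow K$ onto a compact abelian metric group $K$ carrying $f$ to a rotation $\rho_a$ for some $a\in K$. The system $(K,\rho_a)$ is again minimal, being conjugate to the minimal system $(X,f)$; applying minimality to the orbit of the identity shows $\{a^n:n\in\mathbb Z\}$ is dense in $K$, so $\rho_a$ is a minimal rotation in the sense of the present paper.

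Next I would transport the topological hypotheses along $\phi$. Because $\phi$ is a homeomorphism and $X$ is a $1$-dimensional continuum (which is exactly what ``curve'' means here), the group $K$ is also a $1$-dimensional continuum; in particular $K$ is connected. Now Proposition \ref{mono} applies verbatim: a connected compact metric group of dimension $1$ is either a circle or a solenoid. Hence $(X,f)$ is topologically conjugate, via $\phi$, to the minimal rotation $\rho_a$ on $K$, where $K$ is a circle or a solenoid, which is the assertion.

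There is no genuine obstacle in this argument; the only point meriting a word of care is that the conjugacy preserves the relevant topological structure. Connectedness, compactness, metrizability, and covering dimension are all topological invariants, so $K$ inherits from $X$ the property of being a $1$-dimensional continuum, which is precisely the input required for Proposition \ref{mono}. Thus the corollary is simply the composition of Theorem \ref{H-N} with Proposition \ref{mono}, and the proof is a few lines.
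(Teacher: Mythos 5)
Your argument is exactly the paper's: the authors state that the corollary is "immediate from Theorem \ref{H-N} and Proposition \ref{mono}," and your proof spells out precisely that composition (Halmos--von Neumann to obtain a minimal rotation on a compact abelian metric group, then transporting connectedness and dimension $1$ along the conjugacy so that Proposition \ref{mono} forces the group to be a circle or a solenoid). The proof is correct and takes the same route as the paper.
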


  \begin{cor}\label{0-dim}
 Let $X$ be a compact metric space of dimension $0$ and $f:X\rightarrow X$ be a minimal equicontinuous homeomorphism. Then $(X, f)$ is
 either a periodic orbit or topologically conjugate to an adding machine.
 \end{cor}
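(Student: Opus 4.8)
The plan is to reduce everything to the two results immediately preceding the statement. First I would invoke Theorem \ref{H-N} (Halmos--von Neumann): since $(X,f)$ is minimal and equicontinuous, it is topologically conjugate to a minimal rotation $\rho_a$ on some compact abelian metric group $K$. Because topological conjugacy is in particular a homeomorphism of the underlying spaces, and covering dimension is a topological invariant, $K$ is a compact metric group of dimension $0$.

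Next I would extract the algebraic information carried by minimality of $\rho_a$. For a rotation, $O(e,\rho_a)=\{a^n:n\in\mathbb Z\}$ is precisely the cyclic subgroup generated by $a$, and minimality of $(K,\rho_a)$ forces every orbit, hence this one, to be dense in $K$. Thus $K$ is a compact metric group of dimension $0$ possessing a dense cyclic subgroup, which is exactly the hypothesis of the second half of Proposition \ref{mono}. Applying that proposition, $K$ is either a finite cyclic group or an adding machine.

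Finally I would unwind the two cases. If $K$ is a finite cyclic group of order $m$, then $\{a^n:n\in\mathbb Z\}$ is a finite dense subset of $K$, so $K$ is finite with $m$ points and $\rho_a$ permutes them cyclically; transporting through the conjugacy shows $(X,f)$ is a single periodic orbit (of period $m$). If $K$ is an adding machine, then $(K,\rho_a)$ is, by the definition given in the excerpt, a minimal rotation on an adding machine, i.e.\ an adding machine, and $(X,f)$ is topologically conjugate to it. This exhausts all possibilities and completes the argument.

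I do not anticipate a genuine obstacle here: the statement is essentially a bookkeeping corollary of Theorem \ref{H-N} and Proposition \ref{mono}. The only point requiring a word of care is the passage ``$X$ of dimension $0$ $\Rightarrow$ $K$ of dimension $0$,'' which rests on the invariance of dimension under homeomorphism, and the observation that a minimal rotation automatically supplies the dense cyclic subgroup needed to apply Proposition \ref{mono}; both are routine.
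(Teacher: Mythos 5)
Your proposal is correct and matches the intended derivation exactly: the paper states Corollary~\ref{0-dim} is ``immediate from Theorem~\ref{H-N} and Proposition~\ref{mono},'' and your argument simply spells out those two steps (conjugacy to a rotation on a zero-dimensional compact abelian group with a dense cyclic subgroup, then dichotomy via Proposition~\ref{mono}). Nothing to add.
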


  The following proposition is shown by Bing \cite{B60}, which is also implied by the main result in \cite{HO16}.
 \begin{prop}\label{planar}
 Solenoids are not planar continua.
 \end{prop}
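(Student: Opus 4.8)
The plan is to obstruct the embedding cohomologically, combining the continuity of \v{C}ech cohomology on inverse limits with Alexander duality in the $2$-sphere. So suppose, for contradiction, that some solenoid $\Sigma=\varprojlim\{K_i,f_i\}$ is a planar continuum, where each $K_i=\mathbb S^1$ and each bonding map $f_i\colon K_{i+1}\to K_i$ is a surjective continuous endomorphism of the circle, hence $z\mapsto z^{d_i}$ for some nonzero integer $d_i$. Composing a stereographic embedding $\mathbb R^2\hookrightarrow S^2$ with a planar embedding of $\Sigma$, we may view $\Sigma$ as a compact subset of $S^2$.

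First I would compute $\check H^1(\Sigma;\mathbb Z)$. Since $\check H^1(\mathbb S^1;\mathbb Z)\cong\mathbb Z$ and a degree-$d$ self-map of $\mathbb S^1$ induces multiplication by $d$ on $H^1$, continuity of \v{C}ech cohomology gives $\check H^1(\Sigma;\mathbb Z)\cong\varinjlim\bigl(\mathbb Z\xrightarrow{\,\cdot d_1\,}\mathbb Z\xrightarrow{\,\cdot d_2\,}\cdots\bigr)$, which is the subgroup $G$ of $\mathbb Q$ consisting of all rationals of the form $m/(d_1d_2\cdots d_k)$. By the definition of a solenoid, $\Sigma$ is not the circle, and this is precisely the condition that infinitely many of the $|d_i|$ are $\geq 2$ (if only finitely many are, the inverse limit is a circle); hence the denominators occurring in $G$ are unbounded, so $G$ is not cyclic. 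A subgroup of $\mathbb Q$ can be free abelian only if it is cyclic, since any two rationals are $\mathbb Z$-linearly dependent; therefore $G$ is not a free abelian group.

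On the other hand, Alexander duality for the compactum $\Sigma\subset S^2$ yields $\check H^1(\Sigma;\mathbb Z)\cong\widetilde H_0(S^2\setminus\Sigma;\mathbb Z)$. The complement $S^2\setminus\Sigma$ is open in $S^2$, hence locally connected, hence a disjoint union of at most countably many open connected sets, so its reduced $0$-th singular homology is a free abelian group. This contradicts the previous paragraph, and the proposition follows. The only non-elementary ingredients are the two standard topological facts just invoked; the one point that genuinely needs care is the equivalence between ``$\varprojlim\{\mathbb S^1,\,z\mapsto z^{d_i}\}$ is not a circle'' and ``infinitely many of the $|d_i|$ are $\geq 2$'', and I would expect that small piece of bookkeeping, rather than anything else, to be the mild obstacle. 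A completely different, purely continuum-theoretic route is Bing's original one: one uses that a solenoid is homogeneous, indecomposable and circle-like, and shows that the circular chains of unboundedly large winding number witnessing circle-likeness cannot be nested inside circular chains of disks in the plane; that argument avoids algebraic topology but the chain bookkeeping there is considerably more delicate.
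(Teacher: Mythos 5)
Your proof is correct, and it is genuinely different from what the paper does: the paper offers no argument of its own, citing instead Bing's 1960 classification (every homogeneous bounded plane continuum containing an arc is a simple closed curve) and the Hoehn--Oversteegen classification of homogeneous plane continua, from either of which non-planarity of solenoids falls out as a corollary. Your route is the standard cohomological obstruction: continuity of \v{C}ech cohomology over the inverse limit identifies $\check H^1(\Sigma;\mathbb Z)$ with $\varinjlim(\mathbb Z\xrightarrow{\cdot d_1}\mathbb Z\xrightarrow{\cdot d_2}\cdots)$, a subgroup of $\mathbb Q$ with unbounded denominators once infinitely many $|d_i|\geq 2$, hence non-cyclic and therefore not free abelian; Alexander duality in $S^2$ forces $\check H^1(\Sigma;\mathbb Z)\cong\widetilde H_0(S^2\setminus\Sigma;\mathbb Z)$, which is free abelian because $S^2\setminus\Sigma$ is an open subset of a surface and so has open (and at most countably many) components. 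All the individual steps check out, including the bookkeeping you flagged: if only finitely many $|d_i|\geq 2$ then the tail of the bonding maps consists of homeomorphisms and the inverse limit is a circle, so under the paper's definition of solenoid (inverse limit of circles that is \emph{not} a circle) infinitely many $|d_i|$ must be at least $2$. As for what each approach buys: your argument is short, self-contained given \v{C}ech continuity and Alexander duality, and actually proves the stronger general fact that no planar compactum can have non-free $\check H^1(\,\cdot\,;\mathbb Z)$, with no appeal to homogeneity, indecomposability, or circle-likeness. Bing's proof is elementary in the sense of avoiding algebraic topology and yields a sharper structural theorem about homogeneous plane continua, but at the cost of a considerably more delicate chain argument; the paper leans on it purely as a black box.
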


We call $x$ in a system $(X, f)$ {\it regularly recurrent} if for any open neighborhood $U$ of $x$, there is a positive integer $n$
such that $f^{kn}(x)\in U$ for each $k=0, 1, \cdots$.
\medskip

 The following proposition is implied by the definition (see \cite{BK04} for a characterization of adding machine using regular recurrence).

 \begin{prop}\label{regular-recurrent}
If $(X, f)$ is an adding machine, then each point $x$ of $X$ is regularly recurrent.
 \end{prop}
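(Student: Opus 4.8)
The plan is to reduce to the standard model and then exploit the subgroup structure of an inverse limit of finite cyclic groups. Since regular recurrence is obviously preserved under topological conjugacy (a conjugacy carries open neighbourhoods to open neighbourhoods and commutes with all iterates, hence preserves the condition $f^{kn}(x)\in U$ for all $k\ge 0$), I may assume outright that $X=K=\lim\limits_{\leftarrow}\{K_i,f_i\}$ with each $K_i$ a finite cyclic group and that $f=\rho_a$ is a minimal rotation by some element $a=(a_i)\in K$. I write $\pi_i\colon K\to K_i$ for the $i$-th coordinate projection; each $\pi_i$ is a continuous surjective group homomorphism (surjectivity follows from compactness of the $K_i$ together with surjectivity of the bonding maps $f_i$).

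The key point I would establish is that the clopen subgroups $H_i:=\ker\pi_i$, $i\ge 1$, form a neighbourhood basis at the identity $e$ of $K$. This is the usual description of the topology on an inverse limit of finite groups: $K$ carries the subspace topology from $\prod_i K_i$, so a basic neighbourhood of $e$ restricts only finitely many coordinates, say $1,\dots,N$; but the compatibility relations $x_j=f_j(x_{j+1})$ force the $N$-th coordinate to determine coordinates $1,\dots,N-1$, so every neighbourhood of $e$ contains $H_N$. By surjectivity of $\pi_i$ we have $[K:H_i]=|K_i|=:n_i<\infty$, and therefore $\pi_i(a^{n_i})=\pi_i(a)^{n_i}$ is the identity of $K_i$ by Lagrange's theorem, i.e. $a^{n_i}\in H_i$.

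With these facts in hand the proof concludes at once. Given $x\in K$ and an open neighbourhood $U$ of $x$, the cosets $xH_i$ form a neighbourhood basis at $x$, so I may pick $i$ with $xH_i\subseteq U$ and set $n=n_i$. Then for every $k\ge 0$, $a^{kn}=(a^{n})^k\in H_i$, whence, using that $K$ is abelian, $f^{kn}(x)=a^{kn}x\in H_i x=xH_i\subseteq U$; thus $x$ is regularly recurrent. I do not expect any genuine obstacle here: the single step that needs a careful (though entirely routine) justification is that the kernels of the coordinate projections form a neighbourhood basis at the identity, and once that is in place the computation $a^{|K_i|}\in\ker\pi_i$ together with commutativity finishes everything.
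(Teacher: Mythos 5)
Your argument is correct and is precisely the ``implied by the definition'' reasoning the paper alludes to without spelling out (the paper simply asserts the proposition and cites \cite{BK04}). Reducing via conjugacy to a rotation $\rho_a$ on $K=\lim_{\leftarrow}\{K_i,f_i\}$, observing that the clopen subgroups $H_i=\ker\pi_i$ form a neighbourhood basis at the identity, and using $a^{|K_i|}\in H_i$ together with commutativity to get $\rho_a^{k|K_i|}(x)\in xH_i$ for all $k\ge 0$ is exactly the standard verification, and every step you flag (surjectivity of the coordinate projections, the neighbourhood-basis claim, Lagrange) is correctly justified.
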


 \subsection{Structures of distal homeomorphisms}

 Let $X$ be a compact metric space with metric $d$ and let $f:X\rightarrow X$ be a  homeomorphism.
 We call that $(X, f)$ is {\it distal} if for any $x\not=y\in X$, $\inf_{i\in\mathbb Z}\{d(f^i(x), f^i(y))\}>0$ .
We suggest the readers to consult \cite{A88} for the proofs of the following well known facts:
(1)  If $(X,f)$ is distal, then $X$ is a disjoint union of minimal sets; (2) If $(X,f)$ is  minimal and distal and $(Y,g)$ is a factor of $(X,f)$, then  $(Y,g)$ is also minimal and distal; (3) Let $(X,f)$ be a minimal distal system. Then it has a maximal
equicontinuous factor.

 \begin{lem}\cite[\S 6]{Rees77}\label{dim in distal system}
 Let $(X,f)$ be a minimal distal system and $\pi: (X,f)\rightarrow (Y,g)$ be a factor map. Then the covering dimension of the fibers $\pi^{-1}(y), y\in Y$,  is constant and
 \[ \dim(X)=\dim(Y)+\dim \pi^{-1}(y).\]
 \end{lem}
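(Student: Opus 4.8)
The plan is to deduce the lemma from the structure theory of minimal distal systems. By Furstenberg's structure theorem in its relative form, the extension $\pi\colon(X,f)\to(Y,g)$ sits at the top of a (possibly transfinite) tower
\[
(X,f)=(X_\eta,f_\eta)\longrightarrow\cdots\longrightarrow(X_1,f_1)\longrightarrow(X_0,f_0)=(Y,g)
\]
in which every successor step $X_{\alpha+1}\to X_\alpha$ is an isometric extension and every limit step is an inverse limit; by Rees's sharpening the tower may be taken finite when $X$ is finite dimensional. Granting the local product structure discussed below (which makes both assertions compose along a tower of isometric extensions), it suffices to prove the lemma for a single isometric extension $p\colon(Z,h)\to(W,k)$ of minimal distal systems and then to iterate.

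For such a $p$ one first constructs the group of the extension: using the $h$-invariant fibrewise metric together with the minimality of $h$, one produces a compact metrizable group $K$ of homeomorphisms of $Z$ commuting with $h$, with $W=Z/K$ and with each fibre $p^{-1}(w)$ a single $K$-orbit. Hence every fibre is homeomorphic to a homogeneous space $K/H$, and, by the standard orbit-closure argument for minimal isometric extensions, the fibres are mutually homeomorphic; in particular $\dim p^{-1}(w)$ equals a constant $r$, which gives the first assertion. Since $Z$ is compact, $p$ is a closed surjection all of whose fibres have dimension $r$, so the Hurewicz inequality for closed maps yields $\dim Z\le\dim W+r$.

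The reverse inequality $\dim Z\ge\dim W+r$ is the heart of the matter, and I expect it to be the main obstacle. The idea is to promote the group action to a local product structure: near each point of $W$ one seeks an open set $U$ and a homeomorphism $p^{-1}(U)\cong U\times(K/H)$, for then $\dim p^{-1}(U)=\dim U+r\le\dim Z$ for every trivializing $U$, and taking the supremum over a trivializing open cover of $W$ gives $\dim Z\ge\dim W+r$ by the local sum theorem for covering dimension. When $K$ is a compact Lie group such local cross-sections exist by the Gleason--Mostert slice theorem; in general one writes $K=\varprojlim K_i$ with each $K_i$ a Lie group, obtains the product structure at every finite stage, and must check that passing to the inverse limit over the zero-dimensional kernels decreases neither the local sections nor the dimension (here one also needs that the fibre $K/H$, being a homogeneous space of a compact group, has the product dimension property $\dim(U\times K/H)=\dim U+\dim K/H$). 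It is precisely this inverse-limit bookkeeping, together with the control of dimension at the limit ordinals of the Furstenberg tower, that constitutes the technical core; the finiteness of the tower in the finite-dimensional case (Rees) is what keeps it manageable, and in the infinite-dimensional case the displayed identity is read with the usual conventions.
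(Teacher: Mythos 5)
The paper does not prove this lemma: it is quoted verbatim from \S 6 of Rees's thesis and used as a black box (as is the subsequent Lemma~2.9), so there is no in-paper proof against which to compare your argument. What can be said is that your reconstruction via the (relative) Furstenberg tower, reducing to the case of a single isometric extension with compact structure group $K$ and fibres of the form $K/H$, is indeed the standard framework in which Rees and Bronstein treat dimension in minimal distal flows, and the first half of your sketch (the group of the extension, homogeneity of fibres, and the Hurewicz upper bound $\dim Z\le\dim W+r$) is sound.

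However, as a proof the sketch is genuinely incomplete on exactly the point you flag as the heart of the matter. The lower bound $\dim Z\ge\dim W+r$ via local product structure requires the \emph{equality} $\dim(U\times K/H)=\dim U+\dim(K/H)$, and this logarithmic law for covering dimension is false for general compacta (Pontryagin's surfaces give $\dim(A\times B)<\dim A+\dim B$); you invoke a ``product dimension property'' of homogeneous spaces of compact groups but give no justification. This is not a detail that can be waved off: it is precisely where one must use the structure of $K$ as an inverse limit of Lie groups (so that $K/H$ is, locally, a product of a manifold with a zero-dimensional compactum) to get the product law. Likewise, the passage through the inverse-limit stages of the tower and through limit ordinals, which you describe as ``inverse-limit bookkeeping,'' is where the constancy of fibre dimension and the additivity must actually be established — for a composition of two isometric extensions the fibres of the composite are not a priori homogeneous spaces, so the constancy assertion does not ``compose'' without the local triviality you have not yet produced. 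Finally, the finiteness of the tower for finite-dimensional $X$ is itself a nontrivial theorem (and the lemma as stated does not assume finite-dimensionality), so appealing to it inside the proof introduces a dependency that would need to be either proved or carefully cited. In short: right skeleton, but the three steps you yourself identify as the technical core are exactly the steps that are missing, and at least one of them (the product formula) would fail outright without the additional structure of $K/H$.
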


%\begin{lem}\cite[Corollary 1.9]{AGW07}\label{distal in 0-dim}
% Let $(X,f)$ be a  distal system. If $X$ is of zero dimension, then $(X,f)$ is equicontinuous.
%\end{lem}

\begin{lem}\cite[p.192, Theorem 3$\cdot$17.13]{Bron79}\label{0-dim fiber}
Let $(X,f)$ be a minimal distal system and $\pi: (X,f)\rightarrow (Y,g)$ be a factor map.  If $(Y,g)$ is equicontinuous and there is some $y\in Y$ with $\dim\pi^{-1}(y)=0$, then $(X, f)$ is also equicontinuous.
\end{lem}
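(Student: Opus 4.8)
The plan is to bootstrap the equicontinuity of the base $(Y,g)$ up to $(X,f)$ along a Furstenberg tower, using Lemma~\ref{dim in distal system} to carry the hypothesis $\dim\pi^{-1}(y)=0$ through every floor of the tower. First I would invoke the relative form of Furstenberg's structure theorem for distal extensions (see \cite{Fu63,A88}): since $(X,f)$ is minimal distal and $(Y,g)$ is a factor of it, $\pi$ factors as a (possibly transfinite) strictly increasing tower
\[
(Y,g)=(X_0,f_0)\longleftarrow(X_1,f_1)\longleftarrow\cdots\longleftarrow(X_\eta,f_\eta)=(X,f),
\]
in which each one-step extension $\pi_\alpha\colon(X_{\alpha+1},f_{\alpha+1})\to(X_\alpha,f_\alpha)$ is isometric and each $(X_\lambda,f_\lambda)$ at a limit ordinal $\lambda$ is the inverse limit of the earlier terms. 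Applying Lemma~\ref{dim in distal system} to $\pi$ gives $\dim X=\dim Y$; applying it to the composite factor maps $X\to X_\alpha$ and $X_\alpha\to Y$ (together with the fact that dimension does not increase under factor maps of minimal distal systems, which is itself Lemma~\ref{dim in distal system}) forces $\dim X_\alpha=\dim Y$ for every $\alpha$; and one further application to the one-step maps $\pi_\alpha$ shows that \emph{every} $\pi_\alpha$ has $0$-dimensional fibers.

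The heart of the argument is then the single-step claim: \emph{an isometric extension $p\colon(Z,h)\to(W,k)$ of minimal systems with $(W,k)$ equicontinuous and all fibers of $p$ zero-dimensional is itself equicontinuous.} The picture to have in mind is that a distal extension of minimal systems is open, so $Z$ is the total space of a fiber bundle over $W$ whose fiber $F$ is a compact $0$-dimensional metric space (with the metric supplied by the isometric structure of $p$) and whose structure group is a closed subgroup $G$ of $\operatorname{Isom}(F)$. Since $F$ is $0$-dimensional, $\operatorname{Isom}(F)$, hence $G$, is \emph{profinite}: an isometry must permute, for each $\epsilon>0$, the finitely many clopen ``$\epsilon$-chain components'' of $F$, and these have diameter tending to $0$ as $\epsilon\to 0$. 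When $W$ is connected this profiniteness forces the bundle to be trivial, since its transition functions are continuous maps from connected open sets into the totally disconnected group $G$, hence locally constant; then $h$ is, up to conjugacy, $(w,t)\mapsto(kw,g_0t)$ for a fixed $g_0\in G$, a product of the equicontinuous rotation $k$ with the isometry $t\mapsto g_0t$, so $(Z,h)$ is equicontinuous. In general $(W,k)$ is a rotation on a compact abelian group (Theorem~\ref{H-N}) which need not be connected, and the real work is to rule out a twisting supported on the totally disconnected quotient of $W$ by its identity component; concretely one would assemble a compatible $h$-invariant metric on $Z$ from a translation-invariant metric on $W$ and the fiber metric, using profiniteness of $G$ to control the comparison of points lying in distinct but nearby fibers.

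With the single-step claim in hand I would finish by transfinite induction along the tower: $(X_0,f_0)=(Y,g)$ is equicontinuous by hypothesis; at a successor ordinal the single-step claim applies (the extension $\pi_\alpha$ being isometric with $0$-dimensional fibers, and $(X_\alpha,f_\alpha)$ equicontinuous by the inductive hypothesis) and gives that $(X_{\alpha+1},f_{\alpha+1})$ is equicontinuous; at a limit ordinal an inverse limit of equicontinuous systems is equicontinuous, which one sees by equipping each term with an $f_\alpha$-invariant compatible metric (available by Theorem~\ref{H-N}) and forming a weighted sum. Taking $\alpha=\eta$ gives that $(X,f)$ is equicontinuous. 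The one genuinely hard point is the single-step claim over a disconnected base --- intuitively the assertion that a profinite bundle of isometries over an adding-machine-like base cannot produce the sort of shearing one sees over the circle in Furstenberg's classical skew product example --- and making this precise over an arbitrary compact abelian group is the technical core, which is what the cited portion of \cite{Bron79} supplies.
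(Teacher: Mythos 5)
The paper offers no proof of this lemma; it is cited verbatim from Bronstein \cite{Bron79}, so there is nothing internal to compare your argument against. On its own terms, your reduction is sound: pushing Lemma~\ref{dim in distal system} both up and down the Furstenberg tower does show that every one-step isometric extension between $Y$ and $X$ has zero-dimensional fibers, and transfinite induction with an inverse-limit argument at limit stages is the right shape for the rest. But the single-step claim is essentially the whole content of the lemma, and that is exactly where you stop. You establish it only when the base $W$ is connected (a continuous cocycle into a profinite structure group must then be constant), and for a general compact abelian base you write that making it precise ``is the technical core, which is what the cited portion of \cite{Bron79} supplies.'' Since Bronstein's Theorem~3$\cdot$17.13 is, up to the tower reduction you have already performed, this very one-step assertion, you have reduced the lemma to itself rather than proved it; the metric you propose to ``assemble'' is never constructed.

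The missing piece is concrete and fillable, and worth spelling out because it shows no hard analysis is hidden there. After passing to the effective action one may take $G$ profinite: the identity component $G_0$ acts on the zero-dimensional fiber $F=G/H$ with connected, hence singleton, orbits, so $G_0$ is trivial. For each open normal subgroup $N$ of $G$, the reduced cocycle $\sigma\bmod N\colon W\to G/N$ is locally constant, hence (since $W$ is a compact group) factors through a finite quotient $W/W'$. If $m$ is the order of the rotation element $a$ in $W/W'$, the accumulated cocycle $\tau(w)=\sigma_N(a^{m-1}w)\cdots\sigma_N(w)$ satisfies $\tau(a^m w)=\tau(w)$, so $h_N^{m\,|G/N|}$ is the bare rotation by $a^{m|G/N|}$ on $W\times G/HN$ and is therefore equicontinuous; a system a power of which is equicontinuous is equicontinuous, so each finite-fiber stage $h_N$ is equicontinuous, and the inverse limit over $N$ gives the single-step claim. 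Until an argument like this (or the metric you gesture at) is actually supplied, the proposal is a correct reduction, not a proof.
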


Clearly, Lemma \ref{0-dim fiber} implies a distal minimal system of zero dimension is equicontinuous. In fact,
this is also true for non-minimal distal systems of zero dimension (see \cite[Corollary 1.9]{AGW07}).

\begin{prop}\label{dis=equ}
Let $X$ be a compact connected metric space and $f:X\rightarrow X$ be a minimal distal homeomorphism.
If $\dim(X)= 1$, then $(X, f)$ is equicontinuous.
\end{prop}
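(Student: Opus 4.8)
The plan is to pass to the maximal equicontinuous factor and then combine the dimension formula of Lemma~\ref{dim in distal system} with the lifting criterion of Lemma~\ref{0-dim fiber}. Since $\dim(X)=1$, the space $X$ has more than one point, so $(X,f)$ is a nontrivial minimal distal system. Let $\pi\colon (X,f)\to (Y,g)$ be its maximal equicontinuous factor, which exists by the facts on distal systems recalled above. Being a factor of a minimal system, $(Y,g)$ is minimal, and $Y=\pi(X)$ is a continuous image of the connected space $X$, hence connected; by construction it is equicontinuous.

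The crucial point is that $Y$ is nondegenerate. A nontrivial minimal distal system always admits a nontrivial equicontinuous factor: this follows from the Furstenberg structure theorem~\cite{Fu63}, since in the transfinite tower of isometric extensions over the one-point system that represents $(X,f)$ the first proper stage is an isometric extension of a point, which is a nontrivial equicontinuous system and a factor of $(X,f)$ (see also~\cite{A88}). Consequently $Y$ is a connected compact metric space with more than one point, so its covering dimension satisfies $\dim(Y)\ge 1$. On the other hand, Lemma~\ref{dim in distal system} applied to $\pi$ gives $\dim(X)=\dim(Y)+\dim\pi^{-1}(y)$ for every $y\in Y$, whence $\dim(Y)\le \dim(X)=1$. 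Therefore $\dim(Y)=1$ and $\dim\pi^{-1}(y)=0$ for every $y\in Y$.

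Finally, the base $(Y,g)$ is equicontinuous and the fiber over some (in fact every) point of $Y$ is zero-dimensional, so Lemma~\ref{0-dim fiber} yields that $(X,f)$ is equicontinuous, as desired.

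The main obstacle is the nondegeneracy of the maximal equicontinuous factor: this is the one place the argument uses genuine structure theory for distal systems rather than the quoted lemmas, and without it the remaining alternative $\dim(Y)=0$ could not be discarded directly from the dimension formula. (Equivalently, $\dim(Y)=0$ would force $Y$ to be a single point by connectedness, i.e.\ $(X,f)$ would be weakly mixing as well as minimal and distal, hence trivial, contradicting $\dim(X)=1$; so that case does not occur.)
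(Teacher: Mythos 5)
Your proof is correct and follows essentially the same route as the paper: pass to the maximal equicontinuous factor, observe it is connected and nondegenerate hence of dimension at least one, deduce from Lemma~\ref{dim in distal system} that the fibers are zero-dimensional, and conclude via Lemma~\ref{0-dim fiber}. The only difference is cosmetic — the paper phrases it as a proof by contradiction whereas you argue directly — and you make explicit the nondegeneracy of the factor, a point the paper compresses.
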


\begin{proof}
Suppose that $(X, f)$ is not equicontinuous. Then  the maximal equicontinuous factor of $(X,f)$ is  nontrivial. Let $\pi: (X, f)\rightarrow (Y, g)$ be the factor map to its maximal equicontinuous factor. In particular, $Y$ is connected. By Lemma \ref{dim in distal system}, we have $\dim(Y)=1$ and for each $y\in Y$, $\dim\pi^{-1}(y)=0$. Then it follows from Lemma \ref{0-dim fiber} that $(X, f)$ is equicontinuous. This contradiction shows that $(X, f)$ is equicontinuous.
\end{proof}

 \section{Nonexistence of an adding machine in the boundary of an open disk}

We call a topological space $X$ is an {\it arc} (resp. {\it open arc}) if it is homeomorphic to the
closed interval $[0, 1]$ (resp. the open interval (0, 1)). We call $X$ a {\it circle} if it is homeomorphic
to the unit circle in the plane, that is $X$ is a simple closed curve.
\medskip

Let $f:\mathbb R^2\rightarrow \mathbb R^2$ be an orientation preserving homeomorphism and
$U\subset\mathbb R^2$ be a bounded $f$-invariant open disk. A {\it cross-cut} of
$U$ is an open arc $\gamma$ in $U$ with $\overline\gamma$ being an arc jointing two points
of $\partial U$. A {\it cross-section} of $U$ is any connected component of $U\setminus\gamma$
for some cross cut $\gamma$ of $U$. A {\it chain} for $U$ is a sequence of sections
$\mathcal C=(D_i)_{i=1}^\infty$ such that $D_1\supset D_2\supset\cdots$ and $\overline{\partial_U D_i}\cap \overline{\partial_U D_i}=\emptyset$
for all $i\not=j$. Two chains $(D_i)_{i=1}^\infty$ and $(D_i')_{i=1}^\infty$ are called {\it equivalent} if for any $i>0$ there is $j>i$ such that
$D_j\subset D_i'$ and $D_j'\subset D_i$. A chain $(D_i)_{i=1}^\infty$ is called a {\it prime chain} if ${\rm diam}(\partial_U D_i)\rightarrow 0$.
An equivalence class of prime chains is called a {\it prime end} of $U$. We use $ b_{\mathcal E}(U)$ to denote  the set of all
prime ends of $U$. Let $\hat{U}=U\cup b_{\mathcal E}(U)$.
\medskip

 Now we topologize $\hat{U}$ as follows.
For a cross-section $D$ of $U$ and for a prime chain $(U_i)$ representing $p\in b_{\mathcal E}(U)$, if
$U_i\subset D$ for sufficiently large $i$, then we call $p$ {\it divides} $D$.
Set ${\mathcal E}(D)=\{p\in b_{\mathcal E}(U): p\ \mbox{divides}\ D\}$.
Consider the family ${\mathcal B}$ consisting of all sets of the form $D\cup {\mathcal E}(D)$ for some
cross-section $D$, together with all open subsets of $U$. Then $\mathcal B$ is a topological basis on
$\hat{U}$. We  endow $\hat{U}$ with the topology generated by $\mathcal B$.
\medskip

The following theorem is known as the Carath\'eodory's prime ends compactification theorem (see \cite{C13, Ca13}).

\begin{thm}[Prime ends compactification]
$\hat{U}$ is homeomorphic to the unit closed disk and $b_{\mathcal E}(U)$ is homeomorphic to
the unite circle $\mathbb S^1$.
\end{thm}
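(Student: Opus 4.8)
The plan is to derive this from the Riemann mapping theorem together with a classical length--area estimate. Since $U$ is a bounded simply connected open subset of $\mathbb R^2\cong\mathbb C$, there is a conformal homeomorphism $\varphi\colon\mathbb D\to U$, where $\mathbb D$ denotes the open unit disk. I would extend $\varphi$ to a map $\hat\varphi\colon\overline{\mathbb D}\to\hat U$ with $\hat\varphi|_{\mathbb D}=\varphi$ that carries $\partial\mathbb D$ bijectively onto $b_{\mathcal E}(U)$; once $\hat\varphi$ is shown to be a homeomorphism, both assertions follow at once, since $\overline{\mathbb D}$ is the closed unit disk and $\partial\mathbb D=\mathbb S^1$.

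The construction of the boundary extension is the core of the argument. Fix $\xi\in\partial\mathbb D$, and for small $r>0$ put $\sigma_r=\mathbb D\cap\{z:|z-\xi|=r\}$, a circular cross-cut of $\mathbb D$. Since $\int_{\mathbb D}|\varphi'|^2=\operatorname{area}(U)<\infty$ --- this is where boundedness of $U$ is used --- a Cauchy--Schwarz argument gives $\int_0^{r_0}\tfrac1r\,\length(\varphi(\sigma_r))^2\,dr<\infty$, so there is a sequence $r_n\downarrow 0$ with $\length(\varphi(\sigma_{r_n}))\to 0$, and hence $\operatorname{diam}\varphi(\sigma_{r_n})\to 0$. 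Because $\varphi$ is a proper map onto $U$ and each $\varphi(\sigma_{r_n})$ is a rectifiable open arc in $U$, its closure is an arc joining two points of $\partial U$, so $\varphi(\sigma_{r_n})$ is a cross-cut of $U$; the cross-sections $D_n$ it cuts off near $\xi$ are nested, and after passing to a subsequence they form a prime chain of $U$, whose equivalence class I take to be $\Phi(\xi)\in b_{\mathcal E}(U)$. One then checks that $\Phi(\xi)$ does not depend on the choices (two families of circular cross-cuts at $\xi$ interleave by nesting, hence give equivalent prime chains) and, dually, that $\varphi^{-1}$ sends a cross-cut of $U$ to a cross-cut of $\mathbb D$.

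It remains to show that $\hat\varphi:=\varphi\cup\Phi$ is a homeomorphism. For injectivity of $\Phi$, distinct $\xi\neq\xi'$ admit disjoint circular cross-cuts at small radius, so the associated prime chains eventually lie in disjoint cross-sections and are inequivalent. For surjectivity, given a prime end $p$ represented by a prime chain $(D_i)$, pull the cross-cuts $\partial_U D_i$ back by $\varphi^{-1}$ to cross-cuts of $\mathbb D$ bounding nested cross-sections $E_i$; the arcs $\overline{E_i}\cap\partial\mathbb D$ form a decreasing sequence of nonempty closed arcs, and a second length--area estimate, now applied to $\varphi^{-1}$ near the set where the $\partial_U D_i$ accumulate, forces their intersection to be a single point $\xi$, for which $\Phi(\xi)=p$. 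Finally, $\hat U$ is Hausdorff and $\overline{\mathbb D}$ is compact, so it suffices to verify that $\hat\varphi$ is continuous; this reduces to the observation that the $\hat\varphi$-preimage of a basic neighbourhood $D\cup\mathcal E(D)$ of a prime end has the form $E\cup(\text{open arc of }\partial\mathbb D)$, with $E=\varphi^{-1}(D)$ a cross-section, which is a basic neighbourhood in $\overline{\mathbb D}$.

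The main obstacle is the length--area estimate at \emph{every} boundary point $\xi$: the existence of radial limits only yields it for almost every $\xi$, which is not enough here. Intertwined with this is the verification that the image circular cross-cuts genuinely satisfy the definition of a prime chain --- that their closures are honest arcs terminating at two points of $\partial U$, rather than degenerate ones --- so that coinciding endpoints must be dealt with (by perturbing the radii, or allowing slightly non-circular cross-cuts). Confirming that $\hat U$ is Hausdorff and that the two topological bases match precisely are the remaining routine points. Note that none of this uses the dynamics of $f$: the theorem is purely a statement about the planar topology of the open disk $U$.
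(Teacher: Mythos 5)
The paper does not prove this theorem: it is stated as the classical Carath\'eodory prime ends compactification theorem and cited directly to Carath\'eodory's two 1913 papers, so there is no in-paper proof to compare against. Your sketch follows the standard conformal-mapping argument (Riemann map plus the length--area method), which is exactly how the theorem is established in the references the paper points to, and the outline is sound.

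A few remarks on the details. The concern you raise about needing the estimate ``at every boundary point'' is not actually a live obstacle in your own argument: the displayed inequality $\int_0^{r_0} r^{-1}\,\length(\varphi(\sigma_r))^2\,dr<\infty$ holds at \emph{every} $\xi\in\partial\mathbb D$, since it is a pointwise-in-$\xi$ consequence of Cauchy--Schwarz and of $\int_{\mathbb D}|\varphi'|^2=\operatorname{area}(U)<\infty$; the almost-everywhere caveat belongs to Fatou's theorem on radial limits, which you never invoke. The genuine soft spots are the two you half-acknowledge at the end. First, the two endpoint limits of $\varphi(\sigma_{r_n})$ on $\partial U$ may coincide, so that $\overline{\varphi(\sigma_{r_n})}$ is a Jordan curve and fails the paper's definition of a cross-cut (which requires $\overline\gamma$ to be an arc with two endpoints); this genuinely needs an argument, either by showing the set of degenerate radii can be avoided or by relaxing the notion of cross-cut to allow coincident endpoints as some modern treatments do. Second, the surjectivity step is under-specified: after pulling the cross-cuts $\gamma_i=\partial_U D_i$ back to cross-cuts of $\mathbb D$, showing that the nested closed arcs $\overline{E_i}\cap\partial\mathbb D$ shrink to a single point is \emph{not} a direct mirror of the first length--area estimate, because the $\gamma_i$ are not circular arcs about a fixed boundary point of $U$, and a small disk about $\gamma_i$ may meet $U$ in several components. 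The missing ingredient is the Wolff/Koebe lemma: a cross-cut of $U$ of sufficiently small diameter has a $\varphi^{-1}$-preimage that cuts off a cross-section of $\mathbb D$ of small diameter. With these two points filled in, the argument is the textbook one.
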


It is well known that the homeomorphism $f|_U$ can be extended to a homeomorphism
$\hat{f}:\hat{U}\rightarrow \hat{U}$. We call the rotation number of $\hat{f}|_{\mathbb S^1}$
the {\it prime ends rotation number} of $f|_{\overline U}$.
\medskip

The following theorem is due to Cartwright and Littlewood \cite{CL51}. One may consult \cite{KLN15}
for the proof of the converse direction under more general settings.

\begin{thm}\label{p-e-r}
If $f$ is nonwandering and has no periodic point in $\partial U$, then the prime ends rotation number
of $f|_{\overline U}$ is irrational.
\end{thm}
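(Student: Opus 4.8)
The plan is to prove the contrapositive: assuming that $f$ is nonwandering and that the prime ends rotation number $\rho$ of $f|_{\overline U}$ equals $p/q$ in lowest terms, we exhibit a periodic point of $f$ in $\partial U$. Since $\hat{f}$ restricts on the prime ends circle $\mathbb S^1=b_{\mathcal E}(U)$ to an orientation-preserving circle homeomorphism of rotation number $p/q$, the set $F:=\operatorname{Fix}(\hat{f}^q|_{\mathbb S^1})$ is nonempty and closed, and every prime end in $F$ is $\hat{f}$-periodic of period exactly $q$.

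First suppose $F$ has nonempty interior in $\mathbb S^1$, so that it contains an open arc. It is classical that the accessible prime ends --- those whose principal set is a single point of $\partial U$ --- are dense in $\mathbb S^1$, so we may choose an accessible prime end $\mathfrak q\in F$. Writing $\Pi(\mathfrak q)=\{y\}$ with $y\in\partial U$, and using that the extension $\hat{f}$ satisfies $\Pi(\hat{f}(\mathfrak q))=f(\Pi(\mathfrak q))$, the identity $\hat{f}^q(\mathfrak q)=\mathfrak q$ forces $f^q(y)=y$; thus $y$ is the desired periodic point.

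Now suppose $F$ has empty interior, so that $\mathbb S^1\setminus F$ is open and dense; here the idea is to manufacture a nonempty wandering open set for $f$. Fix a component $J$ of $\mathbb S^1\setminus F$. Since $\hat{f}^q$ has no fixed point on $J$, it pushes $J$ monotonically toward one of its endpoints $\mathfrak p$ (replacing $f$ by $f^{-1}$ if needed, which affects none of the hypotheses or the conclusion). As $\mathfrak p$ has $\hat{f}$-period exactly $q$, the arcs $J,\hat{f}(J),\dots,\hat{f}^{q-1}(J)$ are pairwise disjoint. We aim to select a cross-cut $\gamma$ of $U$ lying ``close to $\mathfrak p$ on the $J$-side'' whose cross-section $V$ on the side of $\mathfrak p$ satisfies $f^q(V)\subseteq V$ with $W:=V\setminus\overline{f^q(V)}\neq\emptyset$, and for which $\gamma,\hat{f}(\gamma),\dots,\hat{f}^{q-1}(\gamma)$ are pairwise disjoint and unlinked, so that $V,f(V),\dots,f^{q-1}(V)$ are pairwise disjoint. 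Granting this, for $n=kq+r$ with $0\le r<q$ and $k\ge0$ one has $f^n(W)\subseteq f^r(f^{kq}(V))\subseteq f^r(V)$, which is disjoint from $W\subseteq V$ when $r\neq0$, while $f^{kq}(W)\subseteq\overline{f^q(V)}$ is disjoint from $W$ when $k\ge1$; hence $f^n(W)\cap W=\emptyset$ for all $n\ge1$, so $W$ is wandering, contradicting the hypothesis and completing the contrapositive.

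The step I expect to be the main obstacle is the simultaneous construction of $\gamma$ with both properties. The inclusion $f^q(V)\subseteq V$ reflects the attracting behaviour of $\hat{f}^q|_{\mathbb S^1}$ at $\mathfrak p$ along $J$, transported through the prime ends compactification; one must be careful about the behaviour of $\hat{f}^q$ on the far side of $\mathfrak p$ (a one-sided or ``flex'' fixed point) and about whether the prime ends bounding $\gamma$ are accessible, so that honest cross-cuts landing at them exist. The disjointness and unlinkedness of the iterates $\hat{f}^i(\gamma)$ is routine once the landing prime ends of $\gamma$ lie in a sufficiently small arc about $\mathfrak p$, since the closures of $J,\hat{f}(J),\dots,\hat{f}^{q-1}(J)$ have pairwise disjoint interiors --- but passing from disjoint landing arcs to disjoint sections $f^i(V)$ can fail if $\partial U$ spirals near $\mathfrak p$. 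In the residual cases I would fall back on the impression: the $f^q$-invariant continuum $I(\mathfrak p)\subseteq\partial U$, whose $f$-orbit union $\Lambda=\bigcup_i f^i(I(\mathfrak p))$ has at most $q$ connected components, cyclically permuted by $f$; applying the Cartwright--Littlewood fixed point theorem \cite{CL51} --- every invariant continuum of an orientation-preserving plane homeomorphism that does not separate the plane contains a fixed point --- to a periodic component yields a periodic point of $f$ in $\partial U$. A periodic component that \emph{does} separate the plane (a case which is vacuous, for instance, when $\partial U$ is a circle, where the rationality of $\rho$ already produces a boundary periodic point) forces strong structure on $f$ near $\partial U$, and would be handled by passing to a non-separating invariant subcontinuum or by a direct Brouwer-theoretic argument (Theorem \ref{brouwer}) inside the bounded region it determines.
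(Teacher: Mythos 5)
The paper does not prove Theorem~\ref{p-e-r}: it attributes it to Cartwright and Littlewood \cite{CL51} and refers to \cite{KLN15} for a proof in a more general setting, so there is no in-paper argument against which to compare yours line by line. Your overall strategy (prove the contrapositive, split according to whether $F=\operatorname{Fix}(\hat f^q|_{\mathbb S^1})$ has interior) is indeed the classical route, and Case~1 is essentially sound: accessible prime ends are dense in $b_{\mathcal E}(U)$, an accessible prime end $\mathfrak q\in F$ has a well-defined accessible point $y\in\partial U$, and the equivariance of the prime-end extension gives $f^q(y)=y$.

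The gaps are in Case~2, and they are genuine rather than routine. For the cross-cut construction, a cross-cut $\gamma$ separating $\mathfrak p$ from the rest of $\hat U$ necessarily has two landing prime ends, one on each side of $\mathfrak p$ in $\mathbb S^1$; the attracting behaviour along $J$ controls only the $J$-side endpoint. On the opposite side of $\mathfrak p$ the set $F$ has empty interior, so there is either a sequence of parabolic components of $\mathbb S^1\setminus F$ accumulating at $\mathfrak p$ or a nontrivial component on which $\hat f^q$ could be \emph{repelling} from $\mathfrak p$; in the latter case $\hat f^q$ pushes the second endpoint of $\gamma$ away, and one cannot conclude $f^q(V)\subseteq V$ just by shrinking $\gamma$. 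This is precisely the kind of one-sided (``flex'') behaviour you flag, but the remedy is not supplied. The fallback via the impression also has a hole: a periodic component of $\Lambda$ is a subcontinuum of $\partial U$ and may well separate the plane (for instance $I(\mathfrak p)=\partial U$ when $\partial U$ is indecomposable, a situation that actually arises in Cartwright--Littlewood's own motivating examples), and in that regime neither the non-separating Cartwright--Littlewood fixed point theorem nor Theorem~\ref{brouwer} applied ``inside the bounded region'' produces a periodic point \emph{in $\partial U$}; passing to a non-separating invariant subcontinuum is not automatic and is not argued. So the proposal is a reasonable sketch of the known argument, but both branches of Case~2 are left with unfilled, nontrivial steps; since the paper itself treats this as a quoted black box, you would either need to carry those steps through carefully (e.g.\ following the treatment in \cite{KLN15}) or, as the paper does, cite the result.
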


Now we use Theorem \ref{p-e-r} to prove a key result.

\begin{prop}\label{no-add}
If the prime ends rotation number of $f|_{\overline U}$ is irrational, then no minimal set in
$\partial U$ is an adding machine.
\end{prop}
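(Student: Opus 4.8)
We argue by contradiction, carrying the hypothetical adding machine $M\subseteq\partial U$ up to the circle of prime ends $\mathbb S^1=b_{\mathcal E}(U)$. Suppose $M\subseteq\partial U$ is a minimal set of $f$ which is an adding machine; then for each $k$ there is a clopen partition $M=M_0^{(k)}\sqcup\cdots\sqcup M_{n_k-1}^{(k)}$ which is cyclically permuted by $f$, i.e.\ $f(M_i^{(k)})=M_{i+1}^{(k)}$ with indices read modulo $n_k$, and with $\max_i\operatorname{diam}(M_i^{(k)})\to0$ as $k\to\infty$; we may assume $n_k\ge 2$ (by Proposition \ref{regular-recurrent} the points of $M$ are moreover regularly recurrent, but only the clopen structure is used below). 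Since $\hat f|_{\mathbb S^1}$ has irrational rotation number $\rho$ by hypothesis, so does $\hat f^{\,n_k}|_{\mathbb S^1}$, whose rotation number is $n_k\rho$; hence neither map has a periodic point, and by the classical theory of circle homeomorphisms they share the same unique minimal set $N\subseteq\mathbb S^1$, an infinite set contained in no proper closed sub-arc of $\mathbb S^1$.

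Next I would push $M$ into $\mathbb S^1$ via impressions. For a prime end $p$ let $I(p)\subseteq\partial U$ be its impression, a subcontinuum of $\partial U$; recall that $p\mapsto I(p)$ is upper semi-continuous, that $I(\hat f p)=f(I(p))$, and that $\bigcup_{p\in\mathbb S^1}I(p)=\partial U$. For all $i,k$ set $E_i^{(k)}=\{p\in\mathbb S^1:I(p)\cap M_i^{(k)}\neq\emptyset\}$. Then $E_i^{(k)}$ is nonempty (as $M_i^{(k)}\subseteq\partial U=\bigcup_pI(p)$), closed (upper semi-continuity of $I$ and compactness of $M_i^{(k)}$), and $\hat f(E_i^{(k)})=E_{i+1}^{(k)}$, so $\hat f^{\,n_k}(E_i^{(k)})=E_i^{(k)}$. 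A nonempty closed set invariant under $\hat f^{\,n_k}$ contains the unique minimal set $N$, so $N\subseteq E_i^{(k)}$ for all $i,k$. Thus every $p\in N$ has $I(p)$ meeting every piece $M_i^{(k)}$; as $I(p)\cap M$ is closed and the mesh of $\{M_i^{(k)}\}_i$ tends to $0$, this forces
\begin{equation*}
M\subseteq I(p)\qquad\text{for every }p\in N.\tag{$\ast$}
\end{equation*}

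It remains to derive a contradiction from $(\ast)$. Since $\partial U$ is a continuum it is connected, while $M$ is a Cantor set, so $M\subsetneq\partial U$ and $D:=\operatorname{diam}(M)>0$. Take $p_0\in N$ and, far enough along a prime chain representing it, a cross-cut $\gamma$ with $\operatorname{diam}(\gamma)$ as small as we wish; let $D_\gamma$ be its cross-section on the side of $p_0$ and $D_\gamma'$ the other. Because $N$ lies in no proper sub-arc of $\mathbb S^1$, some $p_1\in N$ faces $D_\gamma'$, so $(\ast)$ gives $M\subseteq I(p_0)\subseteq\overline{D_\gamma}$ and $M\subseteq I(p_1)\subseteq\overline{D_\gamma'}$, whence $M\subseteq\overline{D_\gamma}\cap\overline{D_\gamma'}$. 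Applying this to finitely many orbit points $\hat f^{\,j}(p_0)$ spread around $\mathbb S^1$ (their orbit is dense in $N$) produces, for a fixed small $\gamma$, arbitrarily large finite families of pairwise disjoint cross-sections of $U$ with arbitrarily small ``mouths'' whose closures all contain the whole of $M$; using the simple connectivity of $U$ one should then force $M$ into a set of diameter comparable with $\operatorname{diam}(\gamma)<D$, a contradiction.

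The main obstacle is exactly this last step. Two disjoint cross-sections of a domain can have closures sharing a large common boundary set (a ``Lakes of Wada'' phenomenon), so $(\ast)$ is not absurd on purely topological grounds; one must genuinely use that $U$ is simply connected together with the dynamical fact that the small mouths $f^{\,j}(\gamma)$ are distributed densely around $\mathbb S^1$. A cleaner route may be to upgrade $(\ast)$ to a factor-type relationship between $(N,\hat f|_N)$ and the adding machine $(M,f|_M)$ and then observe that $(N,\hat f|_N)$ is either a minimal circle rotation or a Denjoy-type minimal Cantor system — in the latter case the endpoints of a complementary gap of $N$ are proximal, so it is not equicontinuous — so that in neither case is it, nor does it factor onto, an adding machine, contradicting what $(\ast)$ would provide.
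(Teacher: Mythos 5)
Your proposal has a genuine gap, which you flag yourself. The derivation of $(\ast)$---that $M\subseteq I(p)$ for every $p$ in the unique minimal set $N$ of $\hat f$ on the circle of prime ends---is correct, but $(\ast)$ is not absurd on its own. Boundaries of simply connected domains can contain prime ends with very large, even full, impression (when $\partial U$ is an indecomposable continuum a residual set of prime ends already has $I(p)=\partial U$), so forcing a Cantor set into every $I(p)$ does not by itself contradict $\operatorname{diam}(M)>0$. Your first proposed endgame, trapping $M$ in $\overline{D_\gamma}\cap\overline{D_\gamma'}$ and shrinking that, fails exactly in such examples, and it also silently uses that $N$ meets both arcs cut off by $\gamma$, which is false if one of those arcs lies inside a complementary gap of $N$. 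Your second proposed endgame is not available either: $(\ast)$ attaches \emph{all} of $M$ to each $p\in N$, so there is no equivariant choice of a single point of $M$ per prime end, and hence no factor map $N\to M$ to contradict.

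The paper's proof is different and entirely local, avoiding impressions. Fix $p$ in the hypothetical adding machine and use regular recurrence (Proposition \ref{regular-recurrent}) to find $n$ with $f^{kn}(p)\in B(p,\epsilon)$ for all $k\geq 0$. Choose a cross-cut $\gamma$ with $p$ not an endpoint, small enough that $V=B(p,\epsilon)\cap U$ lies in one cross-section $D$, and such that both ${\mathcal E}(D)$ and ${\mathcal E}(D')$ contain the closure of a wandering interval---this is precisely what guarantees a prime-end orbit cannot stay on one side forever. A sequence $x_i\in V$ converging to $p$ in $\overline U$ subconverges to some $q\in b_{\mathcal E}(U)$ in the prime-end compactification $\hat U$; since $\hat f^n$ still has irrational rotation number, some $\hat f^{ln}(q)$ lies in ${\mathcal E}(D')$. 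Then for large $i$, continuity of $f^{ln}$ on $\overline U$ gives $f^{ln}(x_i)\in V\subset D$, while continuity of $\hat f^{ln}$ on $\hat U$ gives $\hat f^{ln}(x_i)\in D'$; since $f$ and $\hat f$ agree on $U$, this is a contradiction. The tension you correctly identified (regular recurrence of $p$ in the metric topology of $\overline U$ versus rotation of prime ends in $\hat U$) is exactly what the paper exploits, but it is resolved at a single point rather than by aggregating impression data over all of $N$.
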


\begin{proof}
Assume to the contrary that there is a minimal set $K\subset \partial U$, which is an adding machine.
Fix $p\in K$. Since the rotation number of $\hat{f}|_{b_{\mathcal E}(U)}$
is irrational, $\hat{f}|_{b_{\mathcal E}(U)}$ is semi-conjugate to an irrational rotation on the unit circle.
Then we take a cross-cut $\gamma$ of $U$ such that for each cross-section $D$ of $\gamma$,
${\mathcal E}(D)$ contains the closure of a wandering interval (if any) of $\hat{f}|_{b_{\mathcal E}(U)}$
and such that $p$ is not an endpoint of $\gamma$ in ${\overline U}$.
Take a sufficiently small  $\epsilon>0$ such that $V\equiv B(p, \epsilon)\cap U$ is contained in a cross-section $D$ of $\gamma$.
Let $D'$ be the cross-section of $\gamma$ other than $D$. By Proposition \ref{regular-recurrent},
there is some positive integer $n$ such that
\begin{equation}\label{p-1}
f^{kn}(p)\in B(p, \epsilon)
\end{equation}
 for all $k\geq 0$.
Take a sequence $(x_i)$ in $V$ such that $x_i\rightarrow p$. Passing to a subsequence if necessary,
we suppose $x_i\rightarrow q\in b_{\mathcal E}(U)\subset\hat{U}$.
So, there is some $l>0$ such that
\begin{equation}\label{p-2}
\hat{f}^{ln}(q)\in {\mathcal E}(D').
\end{equation}
Then for sufficiently large $i$, by equations (\ref{p-1}) and  (\ref{p-2}), and by the continuity, we have both
$f^{ln}(x_i)\in V\subset D$ and $f^{ln}(x_i)=\hat{f}^{ln}(x_i)\in D'$ (see Figure 1 and Figure 2). This is a contradiction.
\end{proof}

\begin{figure}
\begin{tikzpicture}
\draw  [rounded corners=10pt] (0,6) -- (0,0) -- (10,0) -- (10,6)..controls (8.5,6) .. (8.5,5) ;
\draw  plot [smooth, tension=2] coordinates {(8.5,5) (8,2) (7,6) (6,2) (5,6) (4,2) (3.8,5.7) };
\draw [rounded corners=10pt] (3.8,5.8)--(3.8,6)--(3,6)--(3,5);
\draw  plot [smooth, tension=2] coordinates {(3,5)(2.5,2) (2,6) (1.5,2)  (1.2,6) (1,2.2) };
\draw [rounded corners=5pt] (1,2.2)--(1,2)--(0.5,2)--(0.5,2.5);
\draw [loosely dotted, thick] (0.5,2.5)--(0.5, 3.5);
\draw [blue, rounded corners=10pt] (-1,3.5) rectangle (3.5,5.5);
\draw plot [smooth, tension=2] coordinates {(6,0) (8,1.5) (9,1) (10,2)} node [left=3cm,below=0.5cm] {$\gamma$};
\draw (8,0.5) node  {$D'$};
\draw (4,1) node  {$D$};
\draw (5, 0) node[below]  {$U$};
 \filldraw [red] (0,4.4) circle (2pt) node[left, black] {$f^{ln}(p)$}(0,5) circle (2pt) node[black,left] {$p$} ;
  \filldraw [red] (3.3,5) circle (2pt) node[below, black] {$x_1$} (2,5) circle (2pt) node[black,below] {$x_2$}  (1.15,5) circle (2pt) node[black,below] {$x_3$};
  \filldraw [draw=none,pattern=north east lines] (1,3.5)--(1.28,3.5)--(1.29,3.7)--(1.3,5.5)    --(1.1,5.5)--(1.05,5.3)--(1,4.8)--(1,5);
   \filldraw [draw=none,pattern=north east lines] (1.7,3.5)--(2.25,3.5)--(2.25,5)--(2.22,5.5)    --(1.8,5.5)--(1.72,5);
     \filldraw [draw=none,pattern=north east lines] (3,3.5)--(3.3,3.5)--(3.5,3.8)--(3.5,5.2)--(3.4,5.4)--(3.35,5.4)--(3.3,5.5)    --(3,5.5)--(3,5);
\end{tikzpicture}
\caption{ }
\end{figure}

\begin{figure}
\begin{tikzpicture}
\draw (3,3) circle (3cm) node [left=1cm] {$D$} node[below=3cm]{$\hat{U}$};
\draw plot [smooth, tension=2] coordinates {(2,.15)(3.5,2.5)(6,2.9)} node [left=2cm] {$\gamma$} node[left=1cm, below=0.5cm] {$D'$} ;
\draw  (0.5,4.7).. controls +(right:0.5cm) and +(down:0.5cm)..(1.2,5.4);
\draw  (0.3,4.3).. controls +(right:0.5cm) and +(down:1.5cm)..(1.7,5.7) node[below=1.7cm,left=1cm] {$\vdots$};
\draw  (3,6).. controls +(left:0.3cm) and +(down:0.5cm)..(3.5,5.95);
\draw  (2.5,5.95).. controls +(left:0.5cm) and +(down:1.5cm)..(3.8,5.9);
\draw  (4.5,5.6).. controls +(left:1cm) and +(down:1cm)..(5.4,4.8);
 \filldraw [draw=none,pattern=north east lines]  (0.3,4.3)--(0.5,4.3)--(0.6,4.3)--(0.8,4.32)--(1.1,4.38)--(1.2,4.46)--(1.4,4.65)--(1.5,4.73)--(1.55,4.8)--(1.6,5)--(1.65,5.2)--(1.7,5.6)--(1.7,5.65)--(1.7,5.7)--(1.2,5.4)--(1.15,5)--(1,4.81)--(0.9,4.8)--(.7,4.7)--(0.51,4.7);
  \filldraw [draw=none,pattern=north east lines]   (3,6)--(2.5,5.95)--(2.46,5.8)--(2.6,5.58)--(2.8,5.47)--(3,5.35)--(3.5,5.26)--(3.6,5.3)--(3.75,5.5)--(3.8,5.9)--(3.5,5.95)--(3.5,5.8)--(3.4,5.7)--(3.3,5.71)--(3.1,5.8)--(2.9,5.9);
\filldraw [draw=none,pattern=north east lines]   (4.5,5.6)--(4.2,5.5)--(4.3, 5.1)--(4.4,4.99)--(4.45,4.92)--(4.5,4.95)--(5,4.5)--(5.1,4.45)--(5.2,4.49)--(5.38,4.52)--(5.4,4.85)--(5,5.2)--(4.8,5.36);
 \filldraw [red] (3.3,5.5) circle (2pt) node[below, black] {$x_2$} (4.8,5) circle (2pt) node[black,below] {$x_1$}  (1.1,4.65) circle (2pt) node[black,right] {$x_3$};
  \filldraw [red] (.1,3.7) circle (2pt) node[left, black] {$q$} ;
   \filldraw [red] (4,0.2) circle (2pt) node[above,black] {$\hat{f}^{ln}(q)$} ;
\end{tikzpicture}
\caption{ }
\end{figure}

 \section{Existence of an $f$-invariant circle in the boundary of an open disk}

 \begin{prop}\label{inv-circ}
 Let $f:\mathbb R^2\rightarrow\mathbb R^2$ be a nonwandering homeomorphism and let
 $U$ be an $f$-invariant open disk. If $f|_{\partial U}:\partial U\rightarrow\partial U$ is distal and
$f$ has no periodic points except for an only fixed point $O\in U$, then there is an $f$-invariant circle $C$ in $\partial U$
 such that $(C, f|_C)$ is minimal and $O$ belongs to the bounded component of $\mathbb R^2\setminus C$.
 \end{prop}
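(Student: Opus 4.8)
The plan is to combine the prime ends machinery of Section~3 with the dimension theory of distal systems and with planarity, and then to apply a Brouwer‐type fixed point theorem. First, since $f$ is nonwandering and its only periodic point $O$ lies in $U$, $f$ has no periodic point on $\partial U$, so Theorem~\ref{p-e-r} gives that the prime ends rotation number of $f|_{\overline U}$ is irrational, whence by Proposition~\ref{no-add} no minimal set contained in $\partial U$ is an adding machine. Since $f|_{\partial U}$ is distal, $\partial U$ is a disjoint union of minimal sets of $f$; fix one such minimal set $M$. As $M\subseteq\partial U$ and $\partial U$ is closed with empty interior in $\mathbb R^2$, we have $\dim M\le\dim\partial U\le 1$. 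If $\dim M=0$, then, being zero‐dimensional and distal, $(M,f|_M)$ is equicontinuous (by the remark following Lemma~\ref{0-dim fiber}), so by Corollary~\ref{0-dim} it is a periodic orbit or an adding machine; the first is impossible because $f$ has no periodic point on $\partial U$, and the second by Proposition~\ref{no-add}. Hence $\dim M=1$.

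Suppose for the moment that $M$ is connected. Then $(M,f|_M)$ is a minimal distal homeomorphism of a one‐dimensional continuum, i.e. of a curve, so Proposition~\ref{dis=equ} makes it equicontinuous, and then Corollary~\ref{1-dim} shows $M$ is homeomorphic either to a circle or to a solenoid; since $M\subseteq\mathbb R^2$ and solenoids are not planar (Proposition~\ref{planar}), $M$ is a circle, call it $C$. Now $f(C)=C$, so, $f$ being a homeomorphism of $\mathbb R^2$ (hence proper), it maps the bounded component of $\mathbb R^2\setminus C$ (unique by the Jordan curve theorem) onto itself; by the Schoenflies theorem the closure of that component is a closed disk, and Brouwer's fixed point theorem produces a fixed point of $f$ there, which must equal $O$ as $O$ is the only periodic point. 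Since $O\in U$ is disjoint from $C\subseteq\partial U$, the point $O$ lies in the bounded component of $\mathbb R^2\setminus C$; together with the fact that $(C,f|_C)$ is minimal (being the minimal set $M$) and $f(C)=C$, this is exactly the assertion. So the whole difficulty is concentrated in the connectedness of $M$.

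The hard part will be ruling out a disconnected $M$. If $M$ is disconnected, one should look at the induced $f$‐action on the space $\mathcal M$ of connected components of $M$ (a compact metric space, via upper semi‐continuity of the decomposition as in Proposition~\ref{semi-con}): it is a minimal distal factor of $(M,f|_M)$, and by Theorem~\ref{H-N} together with Proposition~\ref{mono} its maximal equicontinuous factor is a rotation on a circle, on a solenoid, on a finite cyclic group, or an adding machine. The solenoid case is excluded by planarity of the components, and the adding machine case should be excluded by an argument parallel to Proposition~\ref{no-add}. In the remaining (finite) situations one gets, for some $m\ge 2$, a clopen partition $M=M_0\sqcup\cdots\sqcup M_{m-1}$ that $f$ permutes cyclically; passing to the filled‐in compacta $\widehat{M_j}$ (adjoining to $M_j$ the bounded components of $\mathbb R^2\setminus M_j$) one has $f(\widehat{M_j})=\widehat{M_{j+1}}$, hence $f^m$ fixes each $\widehat{M_j}$, and a Brouwer/Cartwright--Littlewood fixed point theorem forces the unique fixed point $O$ to lie in every $\widehat{M_j}$; since the $M_j$ are pairwise disjoint and $f$ permutes the filled pieces cyclically and inclusion‐preservingly, a planar nesting analysis then collapses this to $m=1$. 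Making that nesting bookkeeping precise---in particular controlling how the filled pieces can be nested in the plane when $M$ is a one‐dimensional continuum---is the delicate step, and I expect it to be where the full strength of the hypotheses (distality of $f|_{\partial U}$ and uniqueness of the periodic point) is really used.
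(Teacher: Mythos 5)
Your treatment of the connected case is correct and coincides with the paper's first subcase: $\dim M=1$ by ruling out periodic orbits and adding machines, then Proposition~\ref{dis=equ}, Corollary~\ref{1-dim}, and Proposition~\ref{planar} force $M$ to be a circle, and Brouwer's theorem plus uniqueness of the periodic point places $O$ in the bounded complementary component. The preliminary step --- Theorem~\ref{p-e-r} gives irrational prime ends rotation number, and Proposition~\ref{no-add} then excludes adding machines in $\partial U$ --- is also exactly as in the paper.

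The disconnected case, however, is a genuine gap, and you are candid about it. Two specific things are missing. First, your case split for the component space $\mathcal M$ is slightly off: since $\mathcal M$ is a compact zero-dimensional metric space, the remark after Lemma~\ref{0-dim fiber} makes $(\mathcal M,f)$ itself equicontinuous, and Corollary~\ref{0-dim} then says it is a finite periodic orbit or an adding machine; circles and solenoids cannot occur. More seriously, to exclude the adding machine case you say you expect ``an argument parallel to Proposition~\ref{no-add},'' but Proposition~\ref{no-add} concerns a minimal set sitting in $\partial U$ inside the plane, not an abstract quotient. The essential missing ingredient is Moore's theorem (Theorem~\ref{moore}): one must first show, as the paper does in its Case~2 using nonwandering, that nondegenerate components of $M$ with infinite $f$-orbit do not separate the plane, and then collapse each component of $M$ to a point to obtain a new plane homeomorphism $g$ for which $\pi(M)$ is a zero-dimensional infinite minimal set in $\partial\pi(U)$; only then does Proposition~\ref{no-add} apply (now to $g$) to produce the contradiction. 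Without this planar realization the appeal to Proposition~\ref{no-add} does not go through. Second, in the finite case $m\ge 2$ (each component a circle, cyclically permuted), your ``planar nesting analysis'' needs to be pinned down: the paper shows that if the bounded complementary disks $D_i$ of the circles are not pairwise disjoint then some $f^{i-j}$ maps $\overline{D_j}$ strictly inside $D_j$, contradicting nonwandering, while if they are pairwise disjoint then Brouwer gives a fixed point of $f^m$ in each, contradicting uniqueness of the periodic point. Your sketch gestures at this but does not carry it out, so as written the proposal does not establish the proposition.
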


 \begin{proof}
 Fix a minimal set $M$ in $\partial U$. Then ${\rm dim}(M)\leq 1$.
Noting that $f$ is nonwandering and has no periodic point in $\partial U$, by Theorem \ref{p-e-r},
we have the prime end rotation number of $f|_{\overline U}$ is irrational. Then it follows from
Corollary \ref{0-dim} and Proposition \ref{no-add} that $M$ is not an adding machine. So ${\rm dim}(M)=1$.
Thus $M$ has a component $K$ with ${\rm dim}(K)=1$.
\medskip

Now we discuss into several cases:
\medskip

{\bf Case 1.} There is some $n\geq 1$ such that $f^n(K)=K$ and $f^i(K)\cap K=\emptyset$ for $1\leq i<n$.
Clearly, $(K, f^n)$ is minimal. Then, from Proposition \ref{dis=equ}, it is equicontinuous.  By Corollary \ref{1-dim} and Proposition \ref{planar},
$K$ is a circle.
 \medskip

{\bf Subcase 1.1.} $n=1$. Let $C=K$ and let $D$ be the bounded component of $\mathbb R^2\setminus C$.
Since $f$ has no periodic points in $C$, so by Brouwer's fixed point theorem, $O\in D$. Thus $C$ satisfies
the requirement.
 \medskip

{\bf Subcase 1.2.} $n>1$. Let $C_i=f^i(K),\ i=0,\ldots, n-1$. Then $C_i$ are pairwise disjoint. Let $D_i$
be the bounded component of $\mathbb R^2\setminus C_i$. If there are $i\not=j$ such that $C_i\subset D_j$,
then $f^{i-j}({\overline D_j})\subset D_j$. This contradicts the assumption that $f$ is nonwandering.
So these $D_i$ are pairwise disjoint. Since each $D_i$ contains a fixed point of $f^n$ by Brouwer's fixed point theorem,
 this contradicts the assumption that $O$ is the only periodic points of $f$. So this subcase does not occur.
 \medskip

 {\bf Case 2.} $f^i(K),\ i\in\mathbb Z$,\  are pairwise disjoint. Write $K_i=f^i(K)$. If $K$  separates the plane,
 then $\mathbb R^2\setminus K$ has a bounded component, so is each $K_i$. Similar to the arguments in Subcase 1.2,
 we have that for any $i\not=j$, $K_i$ is contained in the unbounded component of $\mathbb R^2\setminus K_j$.
 Thus any bounded component of $\mathbb R^2\setminus K$ is a wandering open set of $f$. This is a contradiction.
  \medskip

 From the above discussions, we get the following claim.
   \medskip

\noindent  {\bf Claim A.} Either the conclusion of Proposition \ref{inv-circ} holds, or $M$ has infinitely many components and any nondegenerate component of $M$ does not separate the plane.
   \medskip

 If the conclusion of  Proposition \ref{inv-circ} does not hold, then by Claim A together with Proposition \ref{semi-con}
 and Theorem \ref{moore},
 we  get a factor $g:\mathbb R^2\rightarrow \mathbb R^2$ of $f$ by identifying each component of $M$ to a point.
Let $\pi:\mathbb R^2\rightarrow \mathbb R^2$  be the factor map. Then $\pi(U)$ is a $g$-invariant open disk and $g$ is a nonwandering
homeomorphism and has no periodic point in $\partial \pi(U)$. So, by Theorem \ref{p-e-r}, the prime ends rotation number of $g|_{\pi(\overline U)}$
is irrational.
Noting that $g|_{\partial \pi(U)}$ is still distal and $\pi(M)$ is totally disconnected and infinite, by Corollary \ref{0-dim},
we see that $(\pi(M), g)$ is an adding machine contained in the boundary of $\pi(U)$.
This contradicts Proposition \ref{no-add}.
   \medskip

All together, we complete the proof.
 \end{proof}

 \section{Existence of an intermediate $f$-invariant circle}\label{ex-circ}

\begin{lem}\label{open-orbit}
Let $X$ be a compact metric space and let $f:X\rightarrow X$ be a distal homeomorphism.
If $K$ is an  $f$-invariant proper closed subset of $X$, then
there are  $\delta>0$ and a nonempty open subset $U$ of $X$ such that
$f^i(U)\cap B(K, \delta)=\emptyset$ for each $i\in \mathbb Z$.
\end{lem}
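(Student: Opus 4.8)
The statement to prove is: if $(X,f)$ is a distal homeomorphism on a compact metric space and $K\subsetneq X$ is an $f$-invariant proper closed subset, then there exist $\delta>0$ and a nonempty open set $U\subseteq X$ with $f^i(U)\cap B(K,\delta)=\emptyset$ for all $i\in\mathbb Z$.

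The plan is to exploit the decomposition of a distal system into minimal sets together with the uniform separation that distality provides. First I would pick a point $x_0\in X\setminus K$; since $K$ is closed there is $r>0$ with $B(x_0,r)\cap K=\emptyset$. Because $(X,f)$ is distal, the orbit closure $\overline{O(x_0,f)}$ is a minimal set (distal systems are disjoint unions of minimal sets, and each point lies in a minimal set on which the system is minimal), and $K$ is a union of minimal sets as well, so either $\overline{O(x_0,f)}\subseteq K$ — impossible since $x_0\notin K$ — or $\overline{O(x_0,f)}\cap K=\emptyset$. Thus $M:=\overline{O(x_0,f)}$ is a compact $f$-invariant set disjoint from the compact set $K$, so $\rho:=\tfrac12 d(M,K)>0$. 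Now $O(x_0,f)\subseteq M\subseteq X\setminus B(K,\rho)$, i.e. every iterate $f^i(x_0)$ stays at distance $\geq\rho$ from $K$; but that only controls the single point $x_0$, not an open neighbourhood, so a naive choice of $U=B(x_0,\varepsilon)$ need not work, since small perturbations can drift toward $K$ under iteration in a merely distal (non-equicontinuous) system.

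The key step, and the one I expect to be the main obstacle, is upgrading "the orbit of the point $x_0$ avoids $B(K,\rho)$" to "the orbit of a whole open set avoids $B(K,\delta)$" for some possibly smaller $\delta$. Here I would use the standard fact that distality is equivalent to the enveloping semigroup $E(X,f)$ being a group of homeomorphisms; concretely, every element $p\in E(X,f)$ (a pointwise limit of iterates $f^{n_k}$) is a bijection of $X$. Suppose for contradiction that no such $U,\delta$ exist: then for every $n$ there is a point $y_n$ with $d(y_n,x_0)<1/n$ and an integer $i_n$ with $d(f^{i_n}(y_n),K)<1/n$. Passing to subsequences, $f^{i_n}\to p$ in $E(X,f)$ and $f^{i_n}(y_n)\to z\in K$ (compactness). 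Since $d(y_n,x_0)\to0$ and elements of $E(X,f)$ act as limits uniformly along the net, one gets $p(x_0)=z\in K$. But $p(x_0)\in\overline{O(x_0,f)}=M$ (each $f^{i_n}(x_0)\in M$ and $M$ is closed, and continuity of evaluation gives $p(x_0)=\lim f^{i_n}(x_0)\in M$), contradicting $M\cap K=\emptyset$. Hence the desired $\delta>0$ and $U$ exist.

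One technical point to handle carefully in the contradiction argument is the joint limit $f^{i_n}(y_n)\to p(x_0)$: one wants $d(f^{i_n}(y_n),f^{i_n}(x_0))\to 0$. This is exactly where distality (or, equivalently, that $E(X,f)$ acts by continuous maps and the convergence $f^{i_n}\to p$ can be taken uniform on the compact set $X$ after refining) is used — the proximal cell of $x_0$ being trivial means $y_n\to x_0$ forces $f^{i_n}(y_n)$ and $f^{i_n}(x_0)$ to stay close; alternatively, if one prefers to avoid the enveloping semigroup, one argues directly: were $d(f^{i_n}(y_n),f^{i_n}(x_0))\not\to0$, passing to a subsequence gives two points $y_n\to x_0$ whose orbits separate at a definite scale infinitely often, while $y_n$ and $x_0$ are eventually arbitrarily close — and a compactness/diagonal argument produces a pair $(a,b)$ with $a=b$ yet $\inf_i d(f^i(a),f^i(b))>0$, absurd. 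Either route closes the proof; I would present the enveloping-semigroup version as it is shortest, citing the relevant facts from \cite{A88}.
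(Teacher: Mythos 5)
Your plan has a genuine gap at precisely the step you flagged as the main obstacle, and both of your proposed ways to close it silently assume equicontinuity rather than distality. To get $p(x_0)=z$ you need $d\bigl(f^{i_n}(y_n),f^{i_n}(x_0)\bigr)\to 0$ whenever $y_n\to x_0$, and that statement is exactly equicontinuity at $x_0$, not a consequence of distality. Distality controls each \emph{fixed} pair of distinct points, $\inf_i d(f^i(a),f^i(b))>0$ for $a\neq b$, but says nothing about a moving sequence $y_n\to x_0$ with $i_n$ varying; in a distal non-equicontinuous system one can indeed have $y_n\to x_0$ while $d(f^{i_n}(y_n),f^{i_n}(x_0))\geq\epsilon$ for all $n$. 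The enveloping-semigroup version has the same flaw: for a distal system $E(X,f)$ is a \emph{group} by Ellis's theorem, but its elements are in general discontinuous bijections rather than homeomorphisms, the topology is that of pointwise convergence, and the convergence $f^{i_n}\to p$ cannot in general be upgraded to uniform convergence --- if it always could, the system would be equicontinuous. Your alternative direct argument also does not close: passing to a subsequence with $d(f^{i_n}(y_n),f^{i_n}(x_0))\geq\epsilon$ yields limit points $a$ and $b$ with $d(a,b)\geq\epsilon$, so $a\neq b$, and there is no mechanism producing a pair that is simultaneously equal and distally separated; no contradiction with distality emerges.

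There is also a structural issue: you pin down $x_0\in X\setminus K$ in advance and try to find a neighborhood of that particular point. The lemma only asserts that \emph{some} open set works, and an arbitrary $x_0\notin K$ need not admit such a neighborhood, since $x_0$ may lie on the boundary of $V_n\equiv\bigcup_{i\in\mathbb Z}f^{-i}\bigl(B(K,1/n)\bigr)$ for every $n$. The paper instead argues by contradiction via Baire category: if the conclusion fails, each $V_n$ is dense and open, so $G=\bigcap_n V_n$ is a dense $G_\delta$; taking $x\in G\setminus K$ gives $\overline{O(x,f)}\cap K\neq\emptyset$, and since $\overline{O(x,f)}$ is minimal (by distality) while $K$ is closed and invariant, minimality forces $\overline{O(x,f)}\subseteq K$, contradicting $x\notin K$. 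The only dynamical input needed is that every orbit closure is minimal; no equicontinuity or enveloping semigroup is required. You should replace the localize-around-$x_0$ strategy with this Baire argument.
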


\begin{proof}
For each positive integer $n$, let $V_n=\{x\in X: f^i(x)\in B(K, \frac{1}{n})\ \mbox{for some}\ i\in \mathbb Z\}$.
If the conclusion of Lemma \ref{open-orbit} does not hold, then $V_n$ is a dense open subset of $X$ for each $n$.
Thus by Baire's Theorem, $G\equiv\cap_{n=0}^\infty V_n$ is a dense $G_\delta$-set. Take $x\in G\setminus K$.
Then $\overline{O(x, f)}\cap K\not=\emptyset$. This contradicts the minimality of $\overline{O(x, f)}$.
\end{proof}

For any two circles $C, C'$ in the plane $\mathbb R^2$, write $C\prec C'$ if $C$
is contained in the bounded component of $\mathbb R^2\setminus C'$.

\begin{prop}\label{inter}
Let $f:\mathbb R^2\rightarrow \mathbb R^2$ be an orientation preserving nonwandering homeomorphism.
Let $A$ be an $f$-invariant annulus with two boundary circles $C_1\prec C_2$. Suppose
$f|_A$ is distal and $f$ has no periodic points except for an only fixed point $O$ in the bounded component of $\mathbb R^2\setminus C_1$.
Then there is an $f$-invariant circle $C$ with $C_1\prec C\prec C_2$.
\end{prop}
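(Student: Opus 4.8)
The plan is to reduce the statement to Proposition \ref{inv-circ}. Write $D_1$ and $D_2$ for the bounded components of $\mathbb R^2\setminus C_1$ and of $\mathbb R^2\setminus C_2$; then $O\in D_1$, $A=\overline{D_2}\setminus D_1$, and, since $f$ is an orientation preserving homeomorphism of $\mathbb R^2$ with $f(A)=A$, it fixes $C_1$ and $C_2$ setwise (interchanging them would force $f(A)=\emptyset$), so $D_1$ and $D_2$ are $f$-invariant open disks. I will produce an $f$-invariant open disk $U$ with $\overline{D_1}\subset U$ and $\partial U\subset\operatorname{int}A:=A\setminus(C_1\cup C_2)$. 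Granting such a $U$, Proposition \ref{inv-circ} applies to it — $f$ is nonwandering, $U$ is an $f$-invariant open disk, $f|_{\partial U}$ is distal since $\partial U$ is a closed $f$-invariant subset of $A$, and the unique periodic point $O$ of $f$ lies in $D_1\subset U$ — and yields an $f$-invariant circle $C\subset\partial U$ with $(C,f|_C)$ minimal and with $O$ in the bounded component of $\mathbb R^2\setminus C$. As $C\subset\operatorname{int}A$, it meets neither $C_1$ nor $C_2$; and it cannot be inessential in the open annulus $\operatorname{int}A$, for an inessential simple closed curve there bounds a disk contained in $\operatorname{int}A$, and that disk, being the bounded complementary component of $C$, would have to contain $O$ — impossible, as $O\in D_1$. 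Hence $C$ is essential, so $C_1$ and $C_2$ lie in different complementary components of $C$; the one containing $O$, hence $\overline{D_1}$ and $C_1$, is the bounded one, and therefore $C_1\prec C\prec C_2$, as required.

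To build $U$, fix a minimal set $M$ of $f|_A$ with $M\subset\operatorname{int}A$: one exists because distality makes $A$ a disjoint union of minimal sets while $\dim A=2>1=\dim(C_1\cup C_2)$, and, applying Lemma \ref{open-orbit} with $K=C_1\cup C_2$, $M$ may even be taken at positive distance from $\partial A$. Let $U_0$ be the component of $\mathbb R^2\setminus M$ containing $\overline{D_1}$ (note $\overline{D_1}\cap M=\emptyset$, as $\overline{D_1}\cap\operatorname{int}A=\emptyset$); it is $f$-invariant, and its frontier, being a nonempty closed $f$-invariant subset of $M$, equals $M$ by minimality. Suppose first $U_0$ is bounded. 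Let $U$ be its filling, i.e.\ $\mathbb R^2$ with the unbounded component of $\mathbb R^2\setminus U_0$ removed; standard plane topology together with the Riemann mapping criterion of Section 2 shows $U$ is an $f$-invariant bounded open disk with $\overline{D_1}\subset U_0\subset U$ and $\partial U\subset\partial U_0=M\subset\operatorname{int}A$ (so in fact $\overline U\subset D_2$), and the first paragraph then completes the proof. I record a shortcut used also below: if some component $K$ of $M$ happens to be an $f$-invariant circle, then by Theorem \ref{brouwer} the disk bounded by $K$ contains a fixed point of $f$, which must be $O$, and the essentiality argument applied to $C:=K$ already yields $C_1\prec K\prec C_2$.

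There remains the case where $U_0$ is unbounded, i.e.\ where $M$ is \emph{inessential}, not separating $C_1$ from $C_2$; this is the heart of the matter, and the plan is to show it cannot occur, by running the arguments of Sections 3 and 4 again. By the shortcut we may assume no component of $M$ is an $f$-invariant circle; then the component-by-component analysis of the proof of Proposition \ref{inv-circ} (its Cases 1.2 and 2, which use that $f$ is nonwandering with $O$ its only periodic point, together with Proposition \ref{dis=equ}, Corollary \ref{1-dim} and Proposition \ref{planar}) forces $M$ to have infinitely many components, none separating the plane. By Proposition \ref{semi-con} and Theorem \ref{moore}, collapsing the components of $M$ to points gives a factor $g$ of $f$ on $\mathbb R^2$ with $g|_{\pi(A)}$ distal and $\pi(M)$ an infinite, totally disconnected, minimal set — hence (a zero-dimensional distal system being equicontinuous, and then by Corollary \ref{0-dim}) an adding machine lying in $\operatorname{int}\pi(A)$. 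From here a contradiction with Proposition \ref{no-add} is to be derived, Theorem \ref{p-e-r} providing the irrational prime ends rotation number and Lemma \ref{open-orbit} the auxiliary open set and invariant disk needed. With this case excluded, $U_0$ is always bounded and the proof is complete.

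The step I expect to be the main obstacle is exactly this last one: showing that no $f$-invariant compact subset of $\operatorname{int}A$ can fail to separate the two boundary circles. The reduction to Proposition \ref{inv-circ} and the filling construction are routine once such a ``going-around'' minimal set is in hand; the genuine difficulty — where Lemma \ref{open-orbit}, Proposition \ref{no-add}, and the irrationality of prime ends rotation numbers are indispensable — is the exclusion of inessential invariant continua in the interior of the annulus.
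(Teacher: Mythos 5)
Your overall plan is the paper's plan: build an $f$-invariant open disk $U$ with $\overline{D_1}\subset U$ and $\partial U\subset\operatorname{int}A$, feed it to Proposition \ref{inv-circ}, and then observe the resulting circle must be essential. That reduction and the final essentiality argument are fine. The gap is exactly where you flagged it: the construction of $U$.

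You build $U$ from a single minimal set $M\subset\operatorname{int}A$, taking the component $U_0$ of $\mathbb R^2\setminus M$ containing $\overline{D_1}$. In the bounded case this works. But in the unbounded case (``$M$ inessential'') your plan to re-run Sections 3--4 does not close. Proposition \ref{no-add} is stated for a minimal set sitting \emph{in the boundary of an $f$-invariant open disk} whose prime end rotation number is irrational, and you never produce such a disk for (the collapse of) $M$. In fact, if $M$ is inessential, then $\mathbb R^2\setminus M$ can have no bounded component at all — any bounded component would be a periodic open disk, and Brouwer (Theorem \ref{brouwer}) would put a periodic point of $f$ in $\operatorname{int}A$, which is forbidden — so $\mathbb R^2\setminus M$ is connected and the natural candidate disk simply does not exist. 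You also want to invoke the ``dim $M=1$, hence a $1$-dimensional component'' analysis from Proposition \ref{inv-circ}, but that step itself relies on Proposition \ref{no-add} to rule out $\dim M=0$, which is circular in your setup. Saying ``Lemma \ref{open-orbit} supplies the auxiliary open set and invariant disk needed'' is not a proof; it is precisely the missing step.

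The paper sidesteps this issue by never fixing a minimal set. It takes $W=\bigcup_{i\in\mathbb Z}f^i(U)$ (with $U$ from Lemma \ref{open-orbit}), lets $K$ be the unbounded component of $\mathbb R^2\setminus W$, and looks at the components $V$ of $\mathbb R^2\setminus K$. Because $K$ is connected, closed, and unbounded, each such $V$ is automatically an open disk (Jordan separation); because $f$ is nonwandering, each has a finite orbit under $f$; Brouwer then forces $O$ to lie in every such $V$, so there is exactly one, it is $f$-invariant, it contains $\overline{D_1}$, and its boundary lies in $\partial K\subset\operatorname{int}A$. There is no ``inessential case'' to exclude — essentiality of $\partial V$ is built into the construction. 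If you want to keep your outline, replace the role of $M$ by $W$ and you recover the paper's proof; alternatively, the inessential case can be killed by a rotation-set argument of the Franks type (as in Claim D of Section 6), but that is a genuinely different and heavier tool than anything you cite.
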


\begin{proof}
By Lemma \ref{open-orbit}, we can take $\delta >0$ and a nonempty open set $U$ in $A$ such that
$f^i(U)\cap B(C_1\cup C_2, \delta)=\emptyset$ for each $i\in \mathbb Z$. Set $W=\cup_{i\in\mathbb Z} f^i(U)$.
Let $K$ be the unbounded component of $\mathbb R^2\setminus W$. Then $K$ is $f$-invariant and
$C_2\subset \stackrel{\circ}{K}$. Let $V$ be a component of $\mathbb R^2\setminus K$. Then $V$ is an open disk by a direct application
of Jordan separation theorem. Since  $f$ is nonwandering, there is some $n\geq 0$ with $f^n(V)=V$.
Then, by Theorem \ref{brouwer}, there is a periodic point of $f^n$
in $V$, so is for $f$. Thus by the assumption, we have $O\in V$. This implies $C_1\subset V$. From the above
discussions, we see that $V$ is the only component of $\mathbb R^2\setminus K$, and hence $f(V)=V$.
Now applying Proposition \ref{inv-circ}, we get the required circle.
\end{proof}

 \section{A decomposition of the annulus into $f$-invariant circles}

In this section, we will complete the proof of the main Theorem. All assumptions are as in Theorem \ref{main}.
WLOG, we may suppose the annulus $A=\{z\in\mathbb C:1\leq |z|\leq 2\}$. Then we extend $g:A\rightarrow A$ to
$f:\mathbb C\rightarrow \mathbb C$ by defining
$$f(x)=\left\{\begin{array}{llll}\frac{|x|}{2}\cdot g(\frac{2x}{|x|}), & |x|>2;\\
g(x), & 1\leq |x|\leq 2;\\
|x|\cdot g(\frac{x}{|x|}), & 0<|x|<1;\\
0, & |x|=0.\end{array}
\right.$$
It is clear from the definition that $f$ is an orientation preserving nonwandering homeomorphism on the plane and
has no periodic points except for the only fixed point $0$.
\medskip

Write $C_1=\{z: |z|=1\}$ and $C_2=\{z: |z|=2\}$. For each circle $C$ in the plane, we use
$D(C)$ and $OD(C)$ to denote the bounded component  and  unbounded component of $\mathbb R^2\setminus C$, respectively.
Let $\prec$ be the transitive order defined in Section \ref{ex-circ}; that is, for circles $C$ and $C'$ in the plane,
  $C \prec C'$ iff $C\subset D(C')$.
Let $$\mathcal C=\{C: C\ \mbox{is an $f$-invariant circle and}\ C_1\prec C\prec C_2\}\cup\{C_1, C_2\}.$$
Let $\mathcal{T}$ be the family of all chains of $\mathcal C$ respect to $\prec$.
Then $\mathcal T$ is a partial set with respect to the inclusion relation on the power set
of $\mathbb R^2$.
Using Zorn's lemma, there is a maximal chain $\mathcal P$ in $\mathcal{T}$.
\medskip

\noindent{\bf Claim A.} $\mathcal P$ is a partition of $A$.
\begin{proof}[Proof of Claim A]
Assume to the contrary that there is some $v\in A\setminus {\cup \mathcal P}$. Since
$C_1, C_2\in \mathcal P$ by the maximality of $\mathcal P$, we have $v\in \stackrel{\circ}{A}$.
Set ${\mathcal P}_1=\{C\in\mathcal P: v\notin D(C)\}$ and  ${\mathcal P}_2=\{C\in\mathcal P: v\in D(C)\}$.
Then $C_1\in {\mathcal P}_1$ and  $C_2\in {\mathcal P}_2$.
\medskip

Now we discuss into several cases.
\medskip

{\bf Case 1.} ${\mathcal P}_1$ has no maximal element. Let $U=\cup_{C\in {\mathcal P}_1} D(C)$.
Then $U$ is an $f$-invariant open disk. From Proposition \ref{inv-circ},
we have an $f$-invariant circle $C_3$ in $\partial U$ with $0\in D(C_3)$.
Clearly, $C_3\not\in\mathcal P$ and $C\prec C_3$ for any $C\in{\mathcal P}_1$.
If $C_3\prec C$ for any $C\in{\mathcal P}_2$, then $\{C_3\}\cup\mathcal P$ is a chain, which
contradicts the maximality of $\mathcal P$. So,
there must exit a $C_4\in {\mathcal P}_2$ such that $C_3\cap C_4\not=\emptyset$.
Let $V$ be a component of $D(C_4)\setminus \overline {D(C_3)}$. Then
$V$ is a component of $\mathbb R^2\setminus (C_3\cup C_4)$, and hence it is an open disk.
Noting that $f$ is nonwandering, we have $f^n(V)=V$ for some $n\geq 0$.
Then by Theorem \ref{brouwer}, $f$ has a periodic point in $V$. This is a contradiction.
So, Case 1 does not happen.

\begin{figure}
\begin{tikzpicture}
 \draw plot[smooth cycle] coordinates {(1,3.5)(0.8,5) (2,5.5) (2.6,7.5) (4,7)(6,7.2)(5.7,6)(7.5,5)(6, 3)(6,1.5)(4.7,0.8)(3.5,.8)(2.6,1.5) (1.5,1.4)(0.6, 2.2)} node [above=4.5cm, right=5.2cm]{$C_4$};
\draw[blue] plot[smooth cycle] coordinates{(1,3.5)(1.8, 4) (2,5.5) (3,6)(4,7)(5,6) (5.7,6) (5.5, 4)(6,3) (3.8, 2.5)(2.6,1.5) (1.5, 2)}  node [above=0.3cm, right=1.3cm]{$C_3$};

 \filldraw [black] (4,4) circle (2pt) node[below, black] {$O$} ;

\draw (4,4) circle (4.5cm) node [above=4.5cm, right=1.2cm] {$C_2$};
\draw (4,4) circle (1cm)  node [above=0.67cm, right=0.67cm] {$C_1$};
 \filldraw [draw=none,pattern=north east lines]   (1,3.6)--(0.9,4)--(0.85, 4.5)--(0.8,5)--(1.9,5.4)--(1.8,4)--(1.3, 3.7)--(1.1, 3.68)--(1,3.6);
  \filldraw [draw=none,pattern=north east lines]   (2.05,5.6)--(2.6,7.5)--(2.9,7.5)--(3.2,7.37)--(3.9,6.95)--(3.7,6.85)--(3,6)--(2.05,5.6);
  \filldraw [draw=none,pattern=north east lines]   (4.1,7)--(6,7.2)--(6,7)--(5.82, 6.6)--(5.6,6.1)--(5,6)--(4.1,7);
   \filldraw [draw=none, pattern=north east lines]   (5.7,6)--(6,5.8)--(6.8,5.5)--(7.2, 5.3)--(7.5,5)--(7.3, 4.6)--(6.3,3.5)--(6.07,3.15)--(5.5,4)--(5.7,6);
 \filldraw [draw=none,pattern=north east lines]   (5.9,2.9)--(6,1.5)--(5.5, 1.1)--(4.7,0.8)--(4,0.75)--(3.5,0.8)--(2.6,1.5)--(3.8,2.5)--(5.9,2.9);
 \filldraw [draw=none,pattern=north east lines]   (2.2,1.5)--(1.6,1.4)--(1,1.7)--(0.6,2.2)--(1,3.5)--(1.1,2.85)--(1.3, 2.3)--(1.5,2)--(1.8,1.7)--(2.2,1.5);
\end{tikzpicture}
\caption{ }
\end{figure}

\medskip

{\bf Case 2.} ${\mathcal P}_2$ has no minimal element. We consider the Riemann sphere
$\hat {\mathbb C}=\mathbb C\cup \{\infty\}$. Let $\mathbb L=\hat {\mathbb C}\setminus \{0\}$.
Then $\mathbb L$ is a plane. Define $\hat{f}:\mathbb L\rightarrow \mathbb L$ by letting $\hat{f}(\infty)=\infty$ and
$\hat{f}(x)=f(x)$ for any $x\in \mathbb C\setminus \{0\}$. Then  $\hat{f}$ is an orientation preserving nonwandering homeomorphism
on the plane $\mathbb L$ and has no periodic points except for the only fixed point $\infty$.
Similar to the discussions in Case 1, we see that Case 2 does not happen.
\medskip

{\bf Case 3.} ${\mathcal P}_1$ has the maximal element $C_5$ and ${\mathcal P}_2$ has the minimal element $C_6$.
Then $C_5\prec C_6$. Applying Proposition \ref{inter}, we get an $f$-invariant circle $C_7$ such that
$C_5\prec C_7\prec C_6$. Then $\{C_7\}\cup \mathcal P$ is a chain. This contradicts the maximality
of $\mathcal P$. Thus Case 3 does not occur.
\medskip

So $A=\cup\mathcal P$; that is $\mathcal P$ is a partition of $A$.
\end{proof}

\noindent{\bf Claim B}. $\prec$ is a dense and complete linear order on $\mathcal{P}$.

\begin{proof}[Proof of Claim B]
(1) Linearity. For any  distinct $C, C'\in \mathcal{P}$, we have $C\cap C'=\emptyset$ and hence either $C\subset D(C')$ or $C'\subset D(C)$. This shows that either $C\prec C'$ or $C'\prec C$. Thus $\prec$ is a linear order.

(2) Density. Let $C, C'\in \mathcal{P}$ be with $C\prec C'$. Then we have $\overline{D(C)}\subsetneq D(C')$. Now for any $x\in D(C')\setminus \overline{D(C)}$, there is some $C''\in\mathcal{P}$ such that $x\in C''$. It follows from the definition of $\prec$ that $C\prec C''\prec C'$. This shows that $\prec$ is a dense order.

(3) Completeness. To the contrary, assume that $\prec$ is incomplete. That is there is a Dedekind Gap, which means that there is a partition $\mathcal{P}=\mathcal{L}\cup \mathcal{U}$ such that
\begin{itemize}
\item For any $C\in \mathcal{L}$ and $C'\in \mathcal{U}$, $C\prec C'$,
\item $\mathcal{L}$ has a maximal element $C^{*}$ and $\mathcal{U}$ has a minimal element $C_{*}$.
\end{itemize}
 Then we have
 \[\bigcup_{C\in\mathcal{L}} C=\overline{D(C^{*})}\setminus D(C_1) \text{ and } \bigcup_{C\in\mathcal{U}} C=\overline{OD(C_{*})}\setminus OD(C_2),  \]
 both of which are closed in $A$. But this contradicts the connectedness of $A$. This shows that $\prec$ is complete.
\end{proof}

Now Claim B implies that $(\mathcal{P},\prec)$ endowed with the ordering topology is homeomorphism to a closed interval. WLOG, we may assume that $(\mathcal{P},\prec)\cong [1,2]$ and use $C_{r}$ to denote elements of $\mathcal{P}$ with $r\in [1,2]$.
\medskip

Recall that  $2^{A}$ is the hyperspace of $A$ endowed with the Hausdorff metric.
\medskip

\noindent{\bf Claim C}. $\mathcal{P}$ is closed in $2^{A}$. Specially, $\mathcal{P}$ is a continuous decomposition.
\begin{proof}[Proof of Claim C]
Let $C_{r_n}$ be a sequence in $\mathcal{P}$ that converges to $K$ in $2^{A}$ under Hausdorff metric. Since $(\mathcal{P},\prec)\cong [1,2]$, by passing to some subsequence, we may assume that $C_{r_{n}}\overset{\prec}{\longrightarrow} C_{r}$ under the ordering topology for some $r\in [1,2]$.
\medskip

Next we will show that $K=C_{r}$ which implies that $\mathcal{P}$ is closed in $2^{A}$. We may assume as well  $C_{r_n}\prec C_r$ for each $n$. Fix an $x\in K$. Then it follows from the definition of Hausdorff metric that there is a sequence $(x_n)$ with $x_n\in C_{r_n}$ such that $x_n\rightarrow x$. Since $x_n\in C_{r_n}\subset D(C_r)$, we have $x\in \overline{D(C_r)}$. To show $x\in C_r$, we assume that $x\in D(C_r)$. Then there is some $s\in [1,r)$ such that $x\in C_{s}$. Since $r_n\rightarrow r$, we have $s< r_n\leq r$ and hence $C_{s}\prec C_{r_n}\preceq C_r$ for any sufficiently large $n$. But in this case,  $x_n$ cannot converge to $x$; this is a contradiction. To sum up, we have $x\in C_r$. Hence $K\subset C_r$ as $x$ is arbitrary. On the other hand,  for any $y\in C_r$, there is some subsequence $(y_{n_i})$ with $y_{n_i}\in C_{r_{n_i}}$ such that $y_{n_{i}}\rightarrow y$.  Indeed, take any point $z\in C_1$ and let $L$ be the segment connecting $z$ and $y$ in $A$. Then $L\cap C_{r_n}\neq\emptyset$ for each $n$ and we choose some $y_n\in L\cap C_{r_n}$. It is clear that $y$ is a limit point of $(y_n)$. The above arguments show that $K=C_r$  and the closedness of $\mathcal{P}$ in $2^A$ is followed.
\end{proof}
\medskip

For any  orientation preserving homeomorphism $\phi$ on a circle, we use $\rho(\phi)$ to denote
the rotation number of $\phi$.
\medskip

\noindent{\bf Claim D.} The rotation numbers of $f|_C$,\ $C\in\mathcal P$,\ are the same
irrational number.
\medskip
\begin{proof}[Proof of Claim D]
Assume to contrary that there are $C'\not= C''\in\mathcal P$ such that the rotation numbers  $\rho(f|_{C'})\not=\rho(f|_{C''})$.
Let $\tilde A$ be the annulus in $A$ with boundary $C'\cup C''$. Write $\alpha'=\rho(f|_{C'})$ and $\alpha''=\rho(f|_{C''})$.
Take a homeomorphism
$h:\tilde A\rightarrow \mathbb S^1\times [0, 1]$ such that $h(C')=\mathbb S^1\times\{0\}$ and
$h(C'')=\mathbb S^1\times\{1\}$. Let $\tilde f=hf|_Ah^{-1}$. Then $\rho(\tilde f|_{h(C')})=\alpha'$
and $\rho(\tilde f|_{h(C'')})=\alpha''$. Let $\mathbb R\times [0, 1]$ be the universal covering  of
$ \mathbb S^1\times [0, 1]$ and let $F: \mathbb R\times [0, 1]\rightarrow \mathbb R\times [0, 1]$
be a lift of $\tilde f$. Noting that $\tilde f$ is homotopic to the identity,
we have $TF=FT$, where $T$ is the unit translation on $\mathbb R\times [0, 1]$
defined by $T(s, t)=(s+1, t)$. Let $\pi:\mathbb R\times [0, 1]\rightarrow \mathbb R$ be the projection to the first coordinate, that is $\pi(s, t)=s$.
For any map $\psi:\mathbb R\times [0, 1]\rightarrow \mathbb R\times [0, 1]$, write $\psi_1=\pi\psi$. 
Then there are $m', m''\in\mathbb Z$ such that
$$
\lim\limits_{n\rightarrow\infty}\frac{F^n_1(x, 0)-x}{n}=\alpha'+m'
$$
and
$$
\lim\limits_{n\rightarrow\infty}\frac{F^n_1(x, 1)-x}{n}=\alpha''+m''.
$$
WLOG, suppose $\alpha'+m'<\alpha''+m''$. 
Take a rational number $\frac{p}{q}$ with $$\alpha'+m'<\frac{p}{q}<\alpha''+m''.$$ 
 Then we have
$$
\lim\limits_{n\rightarrow\infty}\frac{(T^{-p}F^q)^n_1(x, 0)-x}{n}=q(\alpha'+m')-p<0
$$
and
$$
\lim\limits_{n\rightarrow\infty}\frac{(T^{-p}F^q)^n_1(x, 1)-x}{n}=q(\alpha''+m'')-p>0.
$$
Since $f$ is nonwandering, it follows from \cite[Theorem 3.3]{Fr88} that
$\tilde f^q$ has a fixed point in $ \mathbb S^1\times [0, 1]$. That is $\tilde f$ has a periodic point, so is
$f$. This is a contradiction. So the rotation numbers of $f|_C$,\ $C\in\mathcal P$,\ are the same, the irrationality
of which clearly follows from the Poincar\'e's classification theorem for circle homeomorphisms.
\end{proof}

All together, we complete the proof of Theorem \ref{main}.

%\subsection*{Acknowledgements}

\end{document}